\documentclass[12pt]{article}
\usepackage{a4}
\usepackage{amsmath}
\usepackage{amsthm}
\usepackage{amsfonts}
\usepackage{amssymb}
\usepackage{stmaryrd}
\usepackage{cite}
\usepackage{epsfig}
\newtheorem{theorem}{Theorem}
\newtheorem{lemma}[theorem]{Lemma}
\newtheorem{proposition}[theorem]{Proposition}
\newtheorem{corollary}[theorem]{Corollary}
\newtheorem{claim}{Claim}
\newtheorem{conjecture}{Conjecture}
\def\Aut{{\rm Aut}}
\def\CC{{\cal C}}
\def\JJ{{\cal J}}

\def\NN{{\mathbb N}}
\def\RR{{\mathbb R}}
\def\dif{{\rm d}}

\def\WW {{\cal W}}
\def\url#1{{\tt #1}}
\newcommand{\bit}[1]{\left\langle #1\right\rangle}
\newcommand{\unlab}[2]{\left\llbracket #1\right\rrbracket_{#2}}
\newcommand{\rt}{\right}
\newcommand{\lt}{\left}
\begin{document}
\title{Compactness and finite forcibility of graphons\thanks{The~work leading to this invention has received funding from the European Research Council under the European Union's Seventh Framework Programme (FP7/2007-2013)/ERC grant agreement no.~259385.}}
\author{Roman Glebov\thanks{Department of Computer Science, Ben-Gurion University of the Negev, P.O.B. 653, Beer-Sheva 8410501, Israel.
E-mail: {\tt roman.l.glebov@gmail.com}. Previous affiliations: School of Computer Science and engineering, The Hebrew University of Jerusalem, Jerusalem 91904, Israel, and Mathematics Institute and DIMAP, University of Warwick, Coventry CV4 7AL, UK. This author was also supported by the ERC grant ``High-dimensional combinatorics'' at the Hebrew University.}\and
        Daniel Kr\'al'\thanks{Faculty of Informatics, Masaryk University, Botanick\'a 68A, 602 00 Brno, Czech Republic, and Mathematics Institute, DIMAP and Department of Computer Science, University of Warwick, Coventry CV4 7AL, UK. E-mail: {\tt dkral@fi.muni.cz}. This author was also supported by the European Research Council (ERC) under the European Union’s Horizon 2020 research and innovation programme (grant agreement No 648509) and by the Engineering and Physical Sciences Research Council Standard Grant number EP/M025365/1. This publication reflects only its authors' view; the European Research Council Executive Agency is not responsible for any use that may be made of the information it contains.}\and
	Jan Volec\thanks{Department of Mathematics, Emory University, 30307 Atlanta, USA. E-mail: {\tt jan@ucw.cz}. This author was also supported
from the European Union’s Horizon 2020 research and innovation programme under the Marie Sk\l{}odowska-Curie grant agreement No. 800607.
E-mail: {\tt jan@ucw.cz}.
Previous affiliations: Department of Mathematics, ETH, 8092 Zurich, Switzerland, and Mathematics Institute and DIMAP, University of Warwick, Coventry CV4 7AL, UK.}
	}

\date{}
\maketitle

\begin{abstract}
Graphons are analytic objects associated with convergent sequences of graphs.
Problems from extremal combinatorics and theoretical computer science
led to a study of graphons determined by finitely many subgraph densities,
which are referred to as finitely forcible.
Following the intuition that such graphons should have finitary structure,
Lov\'asz and Szegedy conjectured that the topological space of typical vertices of a finitely forcible graphon is always compact.
We disprove the conjecture by constructing a finitely forcible graphon such that the associated space is not compact.
The construction method gives a general framework for constructing finitely forcible graphons with non-trivial properties.
\end{abstract}

\section{Introduction}
\label{sec:intro}

Recently, a theory of limits of combinatorial structures emerged and attracted
substantial attention. 
In this paper, we study the case of limits of dense (finite) graphs,
which has been developed in a series of papers primarily by Borgs, Chayes, Lov\'asz,
S\'os, Szegedy and Vesztergombi~\cite{bib-borgs08+,bib-borgs+,bib-borgs06+,bib-lovasz06+,bib-lovasz10+}.
A sequence of graphs is convergent if the density of every graph in the graphs contained in the sequence converges.
A convergent sequence of graphs can be associated with an analytic object
which is a symmetric measurable function from the unit square $[0,1]^2$ to $[0,1]$;
this object is referred to as a graphon.
Graph limits and graphons are also closely related to flag algebras introduced
by Razborov~\cite{bib-razborov07}, which were successfully applied to numerous
problems in extremal combinatorics~\cite{bib-flag1, bib-flag2, bib-flagrecent, bib-balogh14,bib-balogh16,bib-balogh17,bib-balogh15,bib-coregliano17,bib-cumm13,bib-das13, bib-evan15,bib-falgas15,bib-gethner17, bib-flag3half, bib-goaoc15, bib-flag4, bib-flag5, bib-flag6, bib-hladky17, bib-tilings, bib-flag7, bib-flag8, bib-lidicky17+,bib-nikiforov, bib-flag10, bib-flag9, bib-razborov07, bib-flag11, bib-flag12, bib-reiher16}.
The development of the graph limit theory is also reflected in a recent monograph by Lov\'asz~\cite{bib-lovasz-book}.

In this paper, we are concerned with finitely forcible graphons,
i.e., those that are uniquely determined (up to a natural equivalence) by finitely many subgraph densities.
Such graphons are related to uniqueness of optimal configurations
in extremal graph theory as well as to other problems.
For example,
a classical result on the pseudorandomness of graphs~\cite{bib-chung89+,bib-thomason} asserts that
a large graph is pseudorandom if and only if the homomorphic
densities of $K_2$ and $C_4$ are the same as in the Erd\H{o}s-R\'enyi random graph $G_{n,1/2}$.
This result can be cast in the language of graphons as follows:
the graphon identically equal to $1/2$ is uniquely determined
by homomorphic densities of $K_2$ and $C_4$, in particular, it is finitely forcible.
Another example that can be cast in the language of finite forcibility
is the asymptotic version of the theorem of Tur\'an~\cite{bib-turan}:
there exists a unique graphon with edge density $\frac{r-1}{r}$ and zero density of $K_{r+1}$.

The result on the pseudorandomness of graphs mentioned above
was generalized by Lov\'asz and S\'os~\cite{bib-lovasz08+},
who proved that any graphon that is a step function is finitely forcible.
Lov\'asz and Szegedy~\cite{bib-lovasz11+} found further examples of finitely forcible graphons and
studied properties of finitely forcible graphons in general.
In particular,
they observed that the structure of every example of a finitely forcible graphon they had found is somewhat simple.
Formalizing this intuition,
they associated typical vertices of a graphon $W$ with the topological space $T(W)\subseteq L_1[0,1]$ (see Section~\ref{sect-notation} for definitions)
and observed that all their examples of finitely forcible graphons $W$ have compact and at most $1$-dimensional $T(W)$.
This motivated the following conjectures~\cite[Conjectures 9 and 10]{bib-lovasz11+}.

\begin{conjecture}[Lov\'asz and Szegedy]
\label{conj:1}
If $W$ is a finitely forcible graphon, then $T(W)$ is a compact space.
\end{conjecture}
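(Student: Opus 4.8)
As the abstract indicates, the statement is in fact \emph{false}; the task is therefore to refute it, i.e.\ to construct a finitely forcible graphon $W$ for which $T(W)$ fails to be compact, and indeed fails to be locally compact. The starting observation is that $T(W)$ is a closed subset of $L_1[0,1]$ whose members take values in $[0,1]$, hence $T(W)$ is bounded and complete, so it is compact if and only if it is totally bounded. Thus it suffices to exhibit, inside $T(W)$, a point $g_\infty$ together with functions $g_{n,m}$ ($n,m\in\NN$) such that $g_{n,m}\to g_\infty$ in $L_1$ as $m\to\infty$ uniformly in $n$, while $\|g_{n,m}-g_{n',m}\|_1\ge c/m$ for $n\ne n'$. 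This is the graphon analogue of the set $\{0\}\cup\{e_n/m:n,m\in\NN\}$ in a Hilbert space: every ball around $g_\infty$ contains, for large $m$, an infinite $(c/m)$-separated family, so $g_\infty$ has no compact neighbourhood.

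To realise such a configuration inside a graphon, I would partition $[0,1]$ into finitely many ``structural'' parts together with a distinguished part $C$ that is itself subdivided into countably many subparts $C_{n,m}$ of geometrically decreasing measure; $C$ plays the role of a coordinate/indexing system, with $C_{n,m}$ carrying the typical vertices whose neighbourhood functions are the desired $g_{n,m}$. The bipartite edge patterns between the subparts of $C$ and a few auxiliary ``reading'' parts would be chosen so that a typical vertex in $C_{n,m}$ is recognisable from its neighbourhood and so that the $g_{n,m}$ satisfy the separation-and-shrinking property above. The essential design principle is \emph{self-similarity}: the restriction of $W$ to (a neighbourhood of) $C$ should be a rescaled copy of $W$ itself, or of a fixed smaller pattern, so that one finite family of local constraints, once imposed, automatically propagates through all infinitely many levels.

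Finite forcibility would then be established with the Lov\'asz--Szegedy machinery for building finitely forcible graphons: (i) force the coarse finite partition and the measures of all parts by degree-type conditions and densities of a few small graphs; (ii) force the designated building-block parts to be the constant $0$, constant $1$, or half-graphon $\mathbf 1[x+y<1]$ patterns, each known to be forcible; (iii) force the coordinatisation relations, i.e.\ that the bipartite patterns between $C$ and the reading gadgets encode the intended values, exploiting the recursive structure of $C$ so that finitely many subgraph identities suffice for all levels simultaneously; and (iv) assemble (i)--(iii) into a single finite list of graphs $H_1,\dots,H_k$ with prescribed densities (equivalently, exhibit $W$ as the unique minimiser of a finite nonnegative combination of densities) whose only graphon solution, up to weak isomorphism, is $W$. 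The main obstacle is exactly the uniqueness in (iii)--(iv): one must show that the finitely many constraints, propagating through the infinite self-similar part, leave no room for any competing graphon, which demands delicate gadget engineering and a rigidity analysis of the entire solution set of the constraints. I expect this uniqueness argument, rather than the existence of the intended $W$ or the topological computation of $T(W)$, to be the technical core and to occupy most of the paper; a secondary but nontrivial point is to verify that the separated functions $g_{n,m}$ genuinely lie in $T(W)$ (are not erased by null-set issues), which is arranged by giving every subpart $C_{n,m}$ positive measure.
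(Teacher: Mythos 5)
Your proposal is correct and follows essentially the same route as the paper: you rightly read the conjecture as something to be refuted, you use the same topological criterion (a point $g_\infty$ approached arbitrarily closely by an infinite $\varepsilon$-separated family, hence no compact neighbourhood), and you outline the same forcing machinery for partitioned graphons (force the partition, force half-graphon and pseudorandom blocks, force the bipartite coordinatisation so that finitely many constraints propagate through the countably many geometrically shrinking subparts). The paper's Rademacher graphon $W_R$ realises exactly this plan — with $A$ and $A'$ subdivided into the shrinking intervals $J_k$, the Rademacher-type pattern between $A$ and $C$ producing the separated neighbourhood functions $g_{i,\delta}$, and the bulk of the work indeed going into the uniqueness argument (Theorem~\ref{thm-main}) — so your assessment of where the difficulty lies is also accurate.
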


\begin{conjecture}[Lov\'asz and Szegedy]
\label{conj:2}
If $W$ is a finitely forcible graphon, then $T(W)$ is finite dimensional.
\end{conjecture}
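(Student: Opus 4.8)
The plan is to construct an explicit finitely forcible graphon $W_R$ whose space $T(W_R)$ of typical vertices contains a sequence of points with no convergent subsequence near a limit point, witnessing the failure of local compactness. The idea is to encode an infinite family of "coordinate" structures inside the graphon so that the typical vertices parametrize something like $\{0\}\cup\{1/n:n\in\NN\}$ together with infinitely many independent directions attached near $0$, making no neighborhood of the corresponding vertex relatively compact. Concretely, I would build $W_R$ out of a bounded number of parts: a few ``control'' parts that carry coordinate functions (in the style of the Lov\'asz--Szegedy toolkit for forcibility), a part carrying a one-dimensional scale of densities accumulating at a point, and a part carrying, for each scale value, an extra independently tunable degree; the interaction between these parts is arranged so that the profile (the $L_1$-function a typical vertex induces on $[0,1]$) of a vertex in the accumulation region is forced to take pairwise far-apart values, obstructing compactness.

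The key steps, in order, are: (1) set up the finite forcing; I would specify a finite list of subgraph densities and show, using standard gadgets (forcing a partition into parts by forcing densities of small graphs, forcing a part to be $0/1$-valued, forcing monotone ``coordinate'' structure as in~\cite{bib-lovasz11+} and~\cite{bib-lovasz-book}), that any graphon attaining these densities must, up to weak isomorphism, coincide with the intended $W_R$. (2) Identify $T(W_R)$ explicitly: determine the degree functions $f_x(\cdot)=W_R(x,\cdot)$ up to measure-zero changes, group vertices by their $L_1$-class, and describe the resulting metric space. (3) Exhibit the non-local-compactness: pick the vertex $p\in T(W_R)$ that plays the role of the accumulation point, produce a sequence $p_n\to p$ in $T(W_R)$, and show that in every neighborhood of $p$ there is a further sequence $q_k$ (coming from the ``extra independent directions'' part) with pairwise $L_1$-distances bounded below by a constant, so no neighborhood of $p$ is contained in a compact set. (4) Assemble: conclude Theorem~\ref{thm:main}, and note that non-local-compactness immediately gives non-compactness, recovering the disproof of Conjecture~\ref{conj:1}.

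The main obstacle I anticipate is Step~(1): simultaneously forcing an \emph{infinite} combinatorial pattern (the accumulating scale plus infinitely many independent degrees) with only \emph{finitely many} density constraints. The delicate point is that density conditions are ``global averages,'' so forcing a genuinely infinite hierarchy of parts requires a self-similar / recursive construction in which one finite set of constraints pins down the behavior at every scale at once — essentially bootstrapping a fixed gadget to reproduce itself inside a sub-block, so that ``the same rules'' reapply at geometrically shrinking scales. Making this recursion both forcible by finitely many densities and compatible with the independent tunable directions (which must not be washed out when we zoom in) is where the real work lies; I expect to need a careful rigidity argument showing that any deviation from the intended self-similar structure changes at least one of the finitely many prescribed densities. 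Once the rigidity is in hand, Steps~(2)--(4) are comparatively routine bookkeeping about $L_1$-distances between explicitly known degree functions.
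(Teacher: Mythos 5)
You have aimed your construction at Conjecture~\ref{conj:1} (compactness), but the statement you were asked about is Conjecture~\ref{conj:2} (finite dimensionality), and the paper neither proves nor disproves that conjecture. For the compactness question your plan does track the paper's strategy closely: the paper builds the Rademacher graphon $W_R$ from eight parts, forces the partition through vertex degrees (Lemma~\ref{lm-partition}), forces half graphons and pseudorandom bipartite blocks on individual parts (Lemmas~\ref{lm-gadget} and~\ref{lm-pseudobipartite}), and pins down a self-similar Rademacher-type adjacency pattern between the parts $A$ and $C$ by finitely many rooted, decorated constraints whose rigidity is exactly the recursive bootstrapping you anticipate in your Step~(1); your Step~(3) is Proposition~\ref{prop-non-compact}.

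As an attack on Conjecture~\ref{conj:2}, however, there is a genuine gap. Exhibiting a non-(locally) compact $T(W)$ only shows that the \emph{Minkowski} dimension of $T(W)$ is infinite, because a subset of $L_1[0,1]$ with finite Minkowski dimension must be totally bounded. For the more robust notions of dimension --- in particular the Lebesgue covering dimension --- the space you describe (a sequence of profiles accumulating at a limit profile, with pairwise separated ``extra directions'' attached near the accumulation point) remains at most one-dimensional; the paper states explicitly that the Lebesgue dimension of $T(W_R)$ is one and for this reason declines to claim a disproof of Conjecture~\ref{conj:2}. To actually refute the conjecture one must make $T(W)$ contain infinitely many \emph{continuous} independent parameters, e.g.\ a homeomorphic copy of $[0,1]^\infty$, which is the content of the follow-up construction the paper cites; your infinitely many independently tunable degrees would have to survive in the limit object as a genuinely infinite-dimensional continuum, not merely as a discrete, non-precompact family of points.
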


In relation to the former of the conjectures,
they noted that they could not even prove that $T(W)$ had to be locally compact.
In this paper, we present a construction of a finitely forcible graphon $W$ such that
$T(W)$ fails to be locally compact, in particular, $T(W)$ is not compact.

\begin{theorem}
\label{thm:main}
There exists a finitely forcible graphon $W_R$ such that the topological space $T(W_R)$ is not locally compact.
\end{theorem}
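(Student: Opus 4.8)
The plan is to build $W_R$ by a self-similar, ``recursive'' block construction and to establish finite forcibility using the machinery of decorated constraints and coordinate systems of Lov\'asz and Szegedy~\cite{bib-lovasz11+}, pushed to handle an unbounded number of rescaled copies of one fixed gadget. The geometric goal is to arrange that $T(W_R)$ contains a point $p$ around which there cluster infinitely many ``branches'' of diameter bounded away from $0$; concretely, near $p$ the space $T(W_R)$ should look like an infinite star, i.e.\ a point with countably many unit segments emanating from it. Such a space is not locally compact at $p$: a neighbourhood of $p$ contains a ball $B(p,\varepsilon)$, which in turn contains the point at distance $\varepsilon/2$ along each of the countably many branches; these points form an infinite $\varepsilon$-separated set, so the closure of the neighbourhood is not totally bounded and hence not compact. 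As $T(W_R)\subseteq L_1[0,1]$ is a metric space, this is what we need.

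\textbf{The construction.}
I would split $[0,1]$ into a finite ``skeleton'' of parts carrying a coordinate system --- a bounded family of parts on which finitely many density constraints force the graphon to be $0/1$-valued with a prescribed pattern, to encode the linear order of $[0,1]$, and to realise a few polynomial relations (the standard Lov\'asz--Szegedy gadgets) --- together with one distinguished part $Q$ which is itself cut into geometrically shrinking subintervals $Q_1,Q_2,\dots$ with $|Q_k|$ proportional to $2^{-k}$, so that the $Q_k$ accumulate at the right endpoint of $Q$. Inside each $Q_k$, and in the bipartite blocks between $Q_k$ and the coordinate parts, I place a rescaled copy of one fixed gadget $G$, chosen so that the typical vertices of $G$ form a copy of the segment $[0,1]$ attached at one end to a common basepoint. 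The self-similarity --- that the copy in $Q_{k+1}$ is the $2^{-1}$-rescaling of the copy in $Q_k$ --- is what turns finitely many constraints into infinitely many branches. In $T(W_R)$ the basepoints of all the copies collapse to a single point $p$, since their neighbourhoods agree on the coordinate parts in the limit and the $Q_k$ have vanishing measure, while the far ends of the copies stay at distance at least some fixed $\delta>0$ from $p$; thus $T(W_R)$ contains the infinite star and is not locally compact at $p$. It then remains to check that the rest of $W_R$ does not spoil this, which is a direct computation of $W_R(x,\cdot)$ for almost every $x$ and a description of the resulting subset of $L_1[0,1]$.

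\textbf{Finite forcibility.}
This is where the real work lies. Following Lov\'asz and Szegedy, I would record the desired structure as a finite list of decorated constraints --- each asserting that some polynomial expression in densities of small decorated graphs takes a prescribed value --- and argue that $W_R$ is the unique graphon up to weak isomorphism satisfying all of them. The steps: (i) force the skeleton partition, the $0/1$ pattern between skeleton parts, and the coordinate system, so that almost every vertex receives well-defined coordinates from its densities into the skeleton; (ii) force that the complement $Q$ of the skeleton interacts with the coordinate parts exactly as a one-parameter family of rescaled copies of $G$ does, the key point being a \emph{single} constraint relating the ``profile'' of a piece at scale $2^{-k}$ to that at scale $2^{-k-1}$, so that by induction on $k$ every piece is a rescaled copy of $G$, together with one further constraint pinning the geometric rate and the total measure of $Q$; (iii) force the internal structure of $G$ so that its typical vertices form a segment with the intended metric. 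Uniqueness is then a rigidity statement: the constraints leave no free parameter, each value $W_R(x,y)$ being determined almost everywhere as an explicit function of the coordinates of $x$ and $y$.

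\textbf{Main obstacle.}
The hard part is step~(iii) together with the rigidity in step~(ii): one must design $G$ and the finitely many constraints so that the recursion is genuinely \emph{rigid}, i.e.\ so that no graphon can satisfy all the constraints while having finitely many ``wrong'' copies of $G$, a different accumulation pattern, or extra positive-measure parts unaccounted for. Making this work forces a delicate choice of $G$ and of the coordinate machinery --- the coordinates have to be robustly recoverable from densities, and the self-similarity constraint has to be monotone enough to bootstrap from one scale to all scales --- and a bookkeeping argument matching the number of constraints to the number of quantities they must pin down. Everything else, namely the verification that the resulting $T(W_R)$ is not locally compact and the ``soft'' forcing steps, is routine once the machinery is in place.
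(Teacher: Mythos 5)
Your overall strategy matches the paper's: a partitioned graphon with a finite coordinate skeleton plus a distinguished region cut into geometrically shrinking pieces, forced by decorated constraints through the Lov\'asz--Szegedy machinery, with non-local-compactness obtained from an infinite $\varepsilon$-separated set in every neighbourhood of one point of $T(W_R)$ (your ``infinite star''). In the paper, the role of your $Q$ and its pieces $Q_k$ is played by $A$ and $A'$, cut into dyadic intervals $J_k$ of measure $2^{-k}/9$; the parts $B,B',B'',C,C'$ carry half-graphon and Rademacher-type gadget structure in place of your abstract gadget $G$; and the star is exhibited explicitly as a function $g$ together with satellites $g_{i,\delta}=W_R(\cdot,\,2/9-(1+\delta)2^{-i})$ living in the profile of $C$ near its endpoint. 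The non-compactness computation you sketch is sound and matches Proposition~\ref{prop-non-compact}.

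But your proposal has a genuine gap at exactly the place you yourself label the ``main obstacle'': you never supply, or even describe the mechanism behind, the constraints that make the self-similar structure rigid across all scales. Saying ``a single constraint relating scale $2^{-k}$ to scale $2^{-k-1}$, then induct, plus one constraint pinning the rate'' is not yet an argument --- a scale-to-scale relation by itself only bounds each ratio, and you must explain why a fixed finite list of density constraints kills \emph{all} the remaining freedom (ratios other than $1/2$, only finitely many pieces, positive-measure leftovers with empty gaps). The paper closes this with an extremal, equality-case argument that your sketch does not contain: one rooted constraint forces, for the gap $J_x\subseteq A$ around a typical $x$, the identity $\inf J_x = 1/9 - 2|J_x|$, which makes the gaps $J_1,J_2,\ldots$ well-ordered and countable with ratios $\beta_k := 2|J_{k+1}|/|J_k|\le 1$. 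A second constraint fixes the density of non-edges inside $A$, namely $\sum_k|J_k|^2$, to be exactly $1/243$; writing $|J_k|=\frac{1}{9\cdot2^k}\prod_{k'<k}\beta_{k'}$, this is the maximum of the sum subject to $\beta_{k'}\le 1$, attained only when every $\beta_{k'}=1$. Encoding ``all ratios equal $1/2$'' as the equality case of one global density identity is the key device that converts finitely many constraints into rigidity at every scale, and it is the idea missing from your plan. Without something of this kind, step~(ii) of your outline does not close.
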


In order to prove Theorem~\ref{thm:main}, we present a framework for constructing finitely forcible graphons
which uses density constraints and decorated density constraints.
The framework is inspired by the notion of flag algebras and
builds on the arguments used in existing constructions of finitely forcible graphons
such as those in~\cite{bib-lovasz08+,bib-norine-comm} (details related to the construction from~\cite{bib-norine-comm} can be found in Section~\ref{sec:concl}).
This framework can be used to construct finitely forcible graphons with other non-trivial properties.
In particular, it was used in~\cite{bib-our-next-paper} to completely disprove Conjecture~\ref{conj:2} (see Section~\ref{sec:concl} for further details), and
in~\cite{bib-third-paper} to construct finitely forcible graphons with no small weak regularity partitions.
This line of research culminated with~\cite{bib-CKM} and~\cite{bib-KLNS},
where the techniques set out in this paper were used to show that any graphon can be a subgraphon of a finitely forcible graphon.

\section{Notation}
\label{sect-notation}

In this section, we introduce notation related to the concepts used in this paper.
We use $[n]$ to denote the set of the first $n$ positive integers.
A {\em graph\/} is a pair $(V,E)$ where $E\subseteq {V\choose 2}$.
The elements of $V$ are called {\em vertices\/} and the elements of $E$ are called {\em edges\/}.
All graphs considered in this paper are {\em finite\/}, i.e., they have finitely many vertices.
The {\em order\/} of a graph $G$ is the number of its vertices and is denoted by $|G|$.
The {\em density\/} $d(H,G)$ of a graph $H$ in a graph $G$ is the probability that
a uniformly chosen subset of $|H|$ vertices of $G$ induces a subgraph isomorphic to $H$.
If $|H|>|G|$, we set $d(H,G)=0$.
A sequence of graphs $(G_i)_{i\in\NN}$ is {\em convergent}
if the sequence $(d(H,G_i))_{i\in\NN}$ converges for every graph $H$.

We now present basic notions from the theory of dense graph limits as
developed in~\cite{bib-borgs08+,bib-borgs+,bib-borgs06+,bib-lovasz06+}.
A {\em graphon\/} $W$ is a symmetric measurable function from $[0,1]^2$ to $[0,1]$.
Here, symmetric stands for the property that $W(x,y)=W(y,x)$ for every $x,y\in [0,1]$.
We remark that, throughout the paper, we work with the Lebesgue measure on $\RR^d$ only.
A {\em $W$-random graph\/} of order $k$ is obtained by sampling $k$ random points $x_1,\ldots,x_k\in [0,1]$
uniformly and independently and
joining the $i$-th and the $j$-th vertex by an edge with probability $W(x_i,x_j)$.
Since the points of $[0,1]$ play the role of vertices, we refer to them as to vertices of $W$.
To simplify our notation further, if $A\subseteq [0,1]$ is measurable, we use $|A|$ for its measure.
The {\em density $d(H,W)$\/} of a graph $H$ in a graphon $W$ is equal to the probability that
a $W$-random graph of order $|H|$ is isomorphic to $H$.
In particular, the following holds:
\[d(H,W)=\frac{|H|!}{|\Aut(H)|}\int\limits_{[0,1]^{|H|}} \prod_{(i,j)\in E(H)} W(x_i,x_j) \prod_{(i,j)\not\in E(H)} (1-W(x_i,x_j)) \,\dif x_1\cdots\dif x_{|H|},\]
where $\Aut(H)$ is the automorphism group of $H$.
One of the key results in the theory of dense graph limits~\cite{bib-lovasz06+} asserts that
for every convergent sequence $(G_i)_{i\in\NN}$ of graphs with increasing orders,
there exists a graphon $W$ (called the {\em limit} of the sequence) such that 
\[d(H,W)=\lim_{i\to\infty} d(H,G_i)\]
for every graph $H$.
On the other hand, every graphon $W$ is a limit of a convergent sequence of graphs
since the sequence of $W$-random graphs with increasing orders converges with probability one and its limit is $W$.

Every graphon can be assigned a topological space corresponding to its typical vertices~\cite{bib-lovasz10++}.
For a graphon $W$,
define for $u\in [0,1]$ a function $f^W_u(y)=W(u,y)$.
For an open set $A\subseteq L_1[0,1]$, we write $A^W$ for $\lt\{u\in [0,1],\, f_u^W\in A\rt\}$.
Let $T(W)$ be the set formed by the functions $f\in L_1[0,1]$ such that $\lt|U^W\rt|>0$ for every neighborhood $U$ of $f$.
The set $T(W)$ inherits topology from $L_1[0,1]$.
The vertices $u\in [0,1]$ with $f^W_u\in T(W)$ are called {\em typical vertices\/} of a graphon $W$.
Notice that almost every vertex is typical~\cite{bib-lovasz10++}.

Two graphons $W_1$ and $W_2$ are {\em weakly isomorphic} if $d(H,W_1)=d(H,W_2)$ for every graph $H$.
If $\varphi:[0,1]\to[0,1]$ is a measure preserving map,
then the graphon $W^\varphi(x,y):=W(\varphi(x),\varphi(y))$ is weakly isomorphic to $W$.
The opposite is true in the following sense~\cite{bib-borgs10+}:
if two graphons $W_1$ and $W_2$ are weakly isomorphic,
then there exist measure measure preserving maps $\varphi_1:[0,1]\to [0,1]$ and $\varphi_2:[0,1]\to [0,1]$
such that $W_1^{\varphi_1}(x,y)=W_2^{\varphi_2}(x,y)$ for almost every $(x,y)\in [0,1]^2$.

A graphon $W$ is {\em finitely forcible\/} if there exist graphs $H_1,\ldots,H_k$ such that
every graphon $W'$ satisfying $d(H_i,W)=d(H_i,W')$ for all $i\in\{1,\ldots,k\}$ is weakly isomorphic to $W$.
For example, the result of Diaconis, Homes, and Janson~\cite{DiHoJa8} asserts that
the half graphon $W_{\Delta}(x,y)$ defined as $W_{\Delta}(x,y)=1$ if $x+y\ge 1$, and
$W_{\Delta}=0$, otherwise, is finitely forcible.
Also see~\cite{bib-lovasz11+} for further results.

When establishing that a graphon $W$ is finitely forcible,
it is possible, instead of presenting the list of graphs $H_1,\ldots,H_k$ and their densities as given in the previous paragraph,
to present a set of constraints on densities of graphs in $W$ such that $W$ is the unique graphon satisfying these constraints.
We now formalize this.
A {\em constraint\/} is an equality between two density expressions.
A {\em density expression\/} is a formal polynomial in graphs with real coefficients,
i.e., a real number or a graph $H$ are density expressions, and
if $D_1$ and $D_2$ are two density expression, then the sum $D_1+D_2$ and the product $D_1\cdot D_2$ are also density expressions.
The evaluation of a density expression with respect to a graphon $W$
is the value obtained from the expression by a substition of $d(H,W)$ for every graph $H$ in the expression.
A graphon $W$ {\em satisfies\/} a constraint if both sides of the constraint are evaluated to the same.
If $\CC$ is a finite set of constraints and
$W$ is the unique (up to weak isomorphism) graphon that satisfies all constraints in $\CC$,
then $W$ is finitely forcible.
Indeed, $W$ is the unique (up to weak isomorphism) graphon with densities of graphs appearing in $\CC$
equal to their densities in $W$.
This holds since any graphon with the same densities satisfies all constraints in $\CC$ and
thus it is weakly isomorphic to $W$.

We extend the notion of density expressions to rooted density expressions
based on the ideas from the concept of flag algebras from~\cite{bib-razborov07}.
A graph is {\em rooted\/} if it has $m$ distinguished vertices numbered from $1$ to $m$.
These vertices are referred to as {\em roots\/} while the other vertices are non-roots.
Two rooted graphs are {\em compatible\/} if the subgraphs induced by their roots are isomorphic
through an isomorphism mapping the $i$-th root to the $i$-th root of the other.
Similarly, two rooted graphs are {\em isomorphic\/}
if there exists an isomorphism that maps the $i$-th root of one of them to the $i$-th root of the other.

A {\em rooted density expression\/} is a density expression such that all graphs that appear in the expression
are mutually compatible rooted graphs.
We occasionally speak about compatible rooted density expressions
to emphasize that the rooted graphs in all of them are mutually compatible.
The evaluation of a rooted density expression for a graphon $W$
is a random variable as defined in the next paragraph.

Fix a rooted density expression.
Let $H_0$ be the rooted graph induced by the roots of the graphs appearing in the expression and let $m=|H_0|$.
If $d(H_0,W)=0$, then the density expression evaluates to $0$ with probability one.
If $d(H_0,W)>0$,
we define an auxiliary function $c:[0,1]^m\to [0,1]$ as
\[c(x_1,\ldots, x_m)=  \lt(\prod_{(i,j)\in E(H_0)} W(x_i,x_j)\rt) \cdot\lt(\prod_{(i,j)\not\in E(H_0)} (1-W(x_i,x_j)) \rt).\]
In other words,
the value $c(x_1,\ldots,x_m)$ is the probability that a $W$-random graph is isomorphic to $H_0$ through the identity
conditioned on sampling the points $x_1,\ldots,x_m$ corresponding to the vertices of the $W$-random graph.
We next define a probability measure $\mu$ on $[0,1]^m$.
If $A\subseteq [0,1]^m$ is a measurable set, then:
\[ \mu(A)= \frac{\int\limits_A c(x_1,\ldots, x_m) \dif x_1\cdots\dif x_m}{\int\limits_{[0,1]^m} c(x_1,\ldots, x_m) \dif x_1\cdots\dif x_m}.\]
We now define the {\em evaluation\/} of the rooted density expression.
We sample $m$ points $x_1,\ldots,x_m\in [0,1]$ according to the probability measure $\mu$, and
substitute for every rooted graph $H$ in the expression the probability that
a $W$-random graph of order $|H|$ with the first $m$ vertices numbered form $1$ to $m$ is isomorphic to $H$ 
conditioned on that the points corresponding to these $m$ vertices are $x_1,\ldots,x_m$ and these $m$ vertices induce $H_0$.
When the values of $x_1,\ldots,x_m\in [0,1]$ are fixed,
this probability is equal to
\[\frac{(|H|-m)!}{|\Aut(H)|}\int\limits_{[0,1]^{|H|-m}}
\prod\limits_{(i,j)\in E(H)\setminus\binom{H_0}{2}} W(x_i,x_j)\prod\limits_{(i,j)\not\in E(H)\cup\binom{H_0}{2}} (1-W(x_i,x_j))
\dif x_{m+1}\cdots\dif x_{|H|},\]
where $\Aut(H)$ is the set of automorphisms of $H$ (preserving the roots).
This defines the value of the rooted density expression as a random variable
where the randomness comes from the choice of $x_1,\ldots,x_m$ according to $\mu$.

A {\em rooted constraint\/} is an equality between two rooted density expression $D$ and $D'$ such that
the subgraphs induced by the roots in $D$ and $D'$ are the same.
In particular, $D-D'$ is a rooted density expression.
A graphon $W$ {\em satifies} a rooted constraint $D=D'$
if $D-D'$ evaluates to the random variable equal to $0$ with probability one.

It can be shown (see, e.g., \cite{bib-razborov07}) that
for every rooted density expression $D$ with roots inducing a graph $H_0$,
there exists an ordinary density expression $\unlab{D}{}$ such that the following holds for every graphon $W$:
\begin{itemize}
\item if $d(H_0,W)=0$, then $\unlab{D}{}$ evaluates to $0$ for $W$, and
\item if $d(H_0,W)>0$, then $\unlab{D}{}/d(H_0,W)$ evaluates to the expected value of $D$ for $W$.
\end{itemize}
In particular, the evaluation of $\unlab{D}{}$ for $W$ is equal to the expected value of the evaluation of $D$ for $W$
multiplied by $d(H_0,W)$.
It follows that if $D=D'$ is a rooted constraint,
then $W$ satisfies the rooted constraint $D=D'$ if and only if
$W$ satisfies the (ordinary) constraint $\unlab{(D-D')^2}{}=0$.
Since this allows us to express constraints involving rooted density expressions as ordinary constraints,
we will not distinguish between the two types of constraints in the remainder of the paper.

\section{Partitioned graphons}
\label{sect-partitioned}

In this section, we introduce a notion of partitioned graphons.
Some of the methods presented in this section are
analogous to those used by Lov\'asz and S\'os in~\cite{bib-lovasz08+} and
by Norine~\cite{bib-norine-comm} (see the construction in Section~\ref{sec:concl}).
In particular, they used similar types of arguments to specialize
their constraints to parts of graphons they were forcing as we do in this section.
However, since it is hard to refer to any particular lemma in~\cite{bib-lovasz08+}
instead of presenting a full argument, we give all details.

A {\em degree} of a vertex $u\in [0,1]$ of a graphon $W$ is equal to
\[\int\limits_{[0,1]}W(u,y)\dif y\;\mbox{.}\]
Note that the degree is well-defined for almost every vertex of $W$.
A graphon $W$ is {\em partitioned}
if there exist $k\in\NN$ and positive reals $a_1,\ldots,a_k$ with $\sum_i a_i=1$ and
distinct reals $d_1,\ldots,d_k \in [0,1]$ such that
the the set of vertices of $W$ with degree $d_i$ has measure $a_i$.
We will often speak about partitioned graphons
when having in mind fixed values of $k$, $a_1,\ldots,a_k$, and $d_1,\ldots,d_k$,
which will be clear from the context.
We will also refer to $a_i$ as the size and to $d_i$ as the degree of the $i$-th part.

A specific type of partition can be finitely forced as given in the next lemma.

\begin{lemma}
\label{lm-partition}
Let $k$ be a positive integer,
$a_1,\ldots,a_k$ positive reals summing to one, and
$d_1,\ldots,d_k$ distinct reals between zero and one.
There exists a finite set of constraints $\CC$ such that
a graphon $W$ satisfies $\CC$ if and only if
the set of vertices of $W$ with degree $d_i$ has measure $a_i$.
In other words, if and only if
the graphon $W$ is a partitioned graphon with parts of sizes $a_1,\ldots,a_k$ and degrees $d_1,\ldots,d_k$.
\end{lemma}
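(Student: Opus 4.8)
The plan is to encode the degree partition entirely in terms of constraints on subgraph densities, using rooted density expressions to ``localize'' to vertices of a prescribed degree. First I would introduce, for a real parameter $d\in[0,1]$, the rooted-density gadget that expresses the degree of a root. Root a single vertex (so $H_0$ is a single labeled vertex, $m=1$, and $d(H_0,W)=1$); then the rooted graph consisting of the root joined to one non-root has value exactly $f^W_x$-average, i.e. the degree $\int W(x,y)\,\dif y$ at the chosen root $x$. Call this rooted density expression $\deg$. For each target degree $d_i$, the rooted expression $(\deg-d_i)$ vanishes precisely at vertices of degree $d_i$; hence the product $\prod_{i}(\deg-d_i)$ vanishes at \emph{every} vertex whose degree lies in $\{d_1,\dots,d_k\}$. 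Imposing the (ordinary, by the unlabeling trick described just before Section~3) constraint that the square of this product has expectation zero forces almost every vertex of $W$ to have degree in the finite set $\{d_1,\dots,d_k\}$.

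The second step fixes the \emph{measures} $a_i$. For this I would use unrooted constraints built from the same degree gadget but now integrated over the root. Writing $a_i(W)$ for the measure of the set of vertices of degree $d_i$, note that $\int_{[0,1]}(\deg(x))^t\,\dif x=\sum_i a_i(W)\,d_i^{\,t}$ once we already know (from Step 1) that every degree is one of the $d_i$; here the left side, for each integer $t\ge 0$, is an ordinary density expression (it is the homomorphism density of a star $K_{1,t}$, up to normalization). Since the $d_i$ are distinct, the $k\times k$ Vandermonde-type system given by $t=0,1,\dots,k-1$ has a unique solution, so fixing these $k$ density values to $\sum_i a_i d_i^{\,t}$ pins down $(a_1(W),\dots,a_k(W))=(a_1,\dots,a_k)$. (Equivalently one may phrase this as: the measure $\nu$ on $[0,1]$ that is the push-forward of Lebesgue measure under $x\mapsto\deg(x)$ is a finitely-supported measure whose first $k$ moments are prescribed, hence $\nu=\sum_i a_i\delta_{d_i}$.) Collecting the finitely many constraints from Steps 1 and 2 gives the desired finite set $\CC$.

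The main obstacle, and the place where care is needed, is Step~1: the constraint $\prod_i(\deg-d_i)=0$ (with probability one) must be turned into a genuine finite set of ordinary subgraph-density equalities, and one must check that the rooted density expression $\deg$ is legitimately expressible --- i.e. that rooting a single vertex is allowed ($d(H_0,W)=1>0$ so the conditioning measure $\mu$ in Section~2 is just Lebesgue measure) and that $(\deg-d_i)$ really is a compatible rooted density expression (it is: $d_i$ is a constant, $\deg$ is a single-rooted graph, and all share the one-vertex root). Once that is in place, the unlabeling identity quoted in the excerpt (the expectation of a rooted density expression $D$ equals $\unlab{D}{}/d(H_0,W)$, and $D=D'$ is equivalent to $\unlab{(D-D')^2}{}=0$) converts everything mechanically. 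A minor additional point: Step~1 only controls degrees up to a measure-zero set, which is exactly what the statement of the lemma asks for, so no further argument is needed there. I would also remark that the same ``specialization'' idea --- rooting at a vertex and multiplying out linear factors in $\deg$ --- is the tool reused throughout the rest of the paper, so presenting it in full here (as the authors announce) is worthwhile.
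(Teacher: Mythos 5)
Your proof is correct and essentially the same as the paper's: both use the rooted edge $e_1$ (your $\deg$) and the constraint $\prod_{i=1}^k(e_1-d_i)=0$ to confine almost every degree to $\{d_1,\dots,d_k\}$. The only difference lies in how the measures are then pinned down: instead of your moment system $\unlab{e_1^t}{}=\sum_i a_i d_i^{\,t}$ for $t=0,\dots,k-1$, which requires a Vandermonde inversion, the paper imposes the Lagrange-basis constraints $\unlab{\prod_{i\ne j}(e_1-d_i)}{}=a_j\prod_{i\ne j}(d_j-d_i)$ which each isolate a single $a_j$ directly --- the same idea in a different polynomial basis.
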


\begin{proof}
Consider the following set of constraints:
\[\prod\limits_{i=1}^k (e_1-d_i)=0\;\mbox{, and}\]
\[\unlab{\prod\limits_{i=1,\, i\neq j}^k  (e_1-d_i)}{}=a_j\prod\limits_{i=1,\, i\neq j}^k(d_j-d_i) \mbox{ for every $j\in [k]$,}\]
where $e_1$ is an edge with one root and one non-root.
The first constraint is satisfied if and only if
the degree of almost every vertex is equal to one of the numbers $d_1,\ldots,d_k$.

Next, fix $j\in\{1,\ldots,k\}$ and consider the corresponding contraint on the second line.
The rooted density expression inside the $\unlab{\cdot}{}$-operator evaluates to the random variable that
is equal to 
\[\prod_{i=1,i\not=j}^k(d_j-d_i)\]
if the degree of the root vertex is $d_j$ and
equal to zero if the degree is one of the remaining numbers $d_1,\ldots,d_k$.
Since the left hand side of the constraint evaluates to the expected value of this random variable,
the constraint is satisfied if and only if the vertices of degree $d_j$ have measure $a_j$ (assuming
the degree of almost every vertex is equal to one of the numbers $d_1,\ldots,d_k$).
\end{proof}

Fix a positive integer $k$,
positive reals $a_1,\ldots,a_k$ summing to one, and
distinct reals $d_1,\ldots,d_k\in [0,1]$.
We next consider partitioned graphons with $k$ parts described by these parameters and
write $A_i$ for the $i$-th part, $i=1,\ldots,k$.
When $W$ is a partitioned graphon with such parts,
we use $A_i$ to denote the subset of $[0,1]$ formed by the vertices of degree $d_i$.
A graph $H$ is {\em decorated\/} if each vertex of $H$ is decorated with one of the parts $A_1,\ldots,A_k$;
note that some of the vertices of $H$ may have the same decoration.
The density of a decorated graph $H$ in a graphon $W$ is the probability that
a $W$-random graph with each vertex decorated with the part of $W$ that it belongs to
is isomorphic to $H$ by an isomorphism preserving the decorations.
For example, if $H$ is an edge with its two vertices decorated with $A_1$ and $A_2$,
then the density of $H$ is the density of edges between $A_1$ and $A_2$, i.e.,
\[d(H,W)=\int\limits_{A_1\times A_2}W(x,y)\,\dif x\,\dif y\;\mbox{.}\]

Analogously to the case of non-decorated graphs, we can define {\em rooted decorated graphs\/}.
We require that an isomorphism of two rooted decorated graphs preserves
both the numbering of the roots and the decorations of all the vertices.
In particular, two rooted decorated graphs are compatible
if the subgraphs induced by their roots are isomorphic through such an isomorphism.
In this way, we arrive at the notions of {\em rooted decorated density expressions\/} and {\em decorated constraints\/}.
The evaluation of a rooted decorated density expression is defined in the same way as
the evaluation of a rooted density expression, however,
we additionally condition on the root vertices to be chosen from the parts given by the decorations.

The next lemma shows that 
the expressive power of decorated constraints for partitioned graphons
is the same as that of non-decorated constraints.
We will always use the lemma when it is guaranteed that the considered graphons are partitioned.
Before stating and proving the lemma,
we introduce a convention for drawing density expressions:
edges of graphs are always drawn solid, non-edges dashed, and
if two vertices are not joined, then the picture represents the sum over both possibilities.
If a graph contains some roots, the roots are depicted
by square vertices, and the non-root vertices by circles.
Decorations of vertices are always drawn inside vertices.
If there are more roots from the same part, then the squares are rotated to distinguish the roots.
An example of this convention can be found in Figure~\ref{fig-lm-partitioned}.

\begin{lemma}
\label{lm-partitioned}
Let $k$ be a positive integer, $a_1,\ldots,a_k$ positive reals summing to one and
$d_1,\ldots,d_k$ distinct reals between zero and one.
For every (rooted or non-rooted) decorated constraint $D=D'$, there exists a non-decorated constraint $E=E'$ such that
a partitioned graphon $W$ with $k$ parts described by $a_1,\ldots,a_k$ and $d_1,\ldots,d_k$
satisfies $D=D'$ if and only if it satisfies $E=E'$.
\end{lemma}

\begin{figure}
\begin{center}
\epsfbox{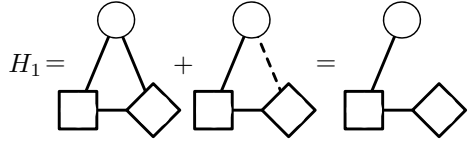}
\end{center}
\caption{The rooted non-decorated graph $H_1$ from the proof of Lemma~\ref{lm-partitioned}
         when $H$ is an edge.}
\label{fig-lm-partitioned}
\end{figure}

\begin{proof}
A rooted decorated constraint can be transformed to an equivalent non-rooted decorated constraint
in the same way that we presented in the non-decorated setting.
Therefore, we assume that the constraint $D=D'$ is non-rooted and
we will construct an equivalent non-decorated constraint $E=E'$.

Let $H$ be a non-rooted decorated graph with vertices $v_1,\ldots,v_n$ such that $v_i$ is labeled with a part $A_{\ell_i}$.
Let $\widetilde{H}$ be the graph $H$ without decorations, and
let $H_i$ be the sum of all rooted non-decorated graphs
on $n+1$ vertices with $n$ roots such that the roots induce $\widetilde{H}$ (with the $j$-th root being $v_j$ for $j=1,\ldots,n$) and
the only non-root is adjacent to $v_i$ in each of the summands.
An example for $i=1$ and $H$ being an edge is given in Figure~\ref{fig-lm-partitioned}.
We claim that the density of $H$ is equal to the following density expression:
\begin{equation}
\label{eq1}
\frac{|H|!}{|\Aut(H)|}\unlab{\prod_{i=1}^n
\prod_{j=1,\, j\neq\ell_i}^k
\frac{H_i-d_j}{d_{\ell_i}-d_j}}{}
\;\mbox{.}
\end{equation}
Indeed, if the density of $\widetilde{H}$ is zero in a graphon $W$,
then the above density expression is zero and so is the density of $H$.
Otherwise, for every choice of the $n$ roots,
the product inside the $\unlab{\cdot}{}$-operator is non-zero if and only if
the $i$-th root belongs to the part $A_{\ell_i}$;
the product is equal to one in such case.
Hence, the value of~\eqref{eq1} is exactly the probability that
uniformly chosen $n$ random vertices induce a labeled copy of $H$ such that the $i$-th vertex belongs to $A_{\ell_i}$.
Let $E$ and $E'$ be non-decorated constraints obtained from $D$ and $D'$, respectively,
by replacing every decorated graph $H$ with the corresponding density expression \eqref{eq1}.
Since $D$ and $E$ evaluate to the same number for every graphon and the same holds for $D'$ and $E'$,
the statement of the lemma follows.
\end{proof}

Since the expressive power of non-decorated and decorated constraints is the same,
we will often drop the adjective decorated in the rest of the paper.

We finish this section with two corollaries of Lemma~\ref{lm-partitioned}.
Informally, the first one says that
it is possible to force a finitely forcible graphon on a part of a partitioned graphon.

\begin{lemma}
\label{lm-gadget}
Let $k$ be a positive integer, $a_1,\ldots,a_k$ positive reals summing to one and
$d_1,\ldots,d_k$ distinct reals between zero and one.
Further, let $W_0$ be a finitely forcible graphon and let $\ell\in [k]$.
There exists a finite set of constraints $\CC$ such that the following holds:
a partitioned graphon $W$ satisfies $\CC$ if and only if
there exist a measure preserving map $\varphi:[0,1]\to [0,1]$ and
a measure preserving map $\varphi'$ from $[0,a_\ell]$ to the $\ell$-part of $W$ such that
\[W_0(\varphi(x),\varphi(y))=W(\varphi'(a_\ell x),\varphi'(a_\ell y))\]
for almost every $x,y\in [0,1]^2$.
Informally, if and only if the subgraphon induced by the $\ell$-th part of $W$ is weakly isomorphic to $W_0$.
\end{lemma}

\begin{proof}
Assume that $W_0$ is forced by $m$ constraints of the form
\[d(H_i,W)=d_i\mbox{ for $i\in [m]$.}\]
The set $\CC$ is formed by the following $m$ constraints
\[d(H'_i,W)=a_{\ell}^{|H_i|}d_i\; \mbox{,}\]
where $H'_i$ is the graph $H_i$ with all vertices decorated with the $A_{\ell}$.
\end{proof}

The second lemma asserts the finite forcibility of a constant edge density between parts of a partitioned graphon.

\begin{lemma}
\label{lm-pseudobipartite}
Let $k$ be a positive integer, $a_1,\ldots,a_k$ positive reals summing to one and
$d_1,\ldots,d_k$ distinct reals between zero and one.
Further, let $\ell,\ell'\in [k]$ and $p\in [0,1]$.
There exists a finite set of constraints $\CC$ such that the following holds:
a partitioned graphon $W$ satisfies $\CC$ if and only if
$W(x,y)=p$ for almost every $x$ and $y$ from the $\ell$-th and $\ell'$-th parts, respectively.
\end{lemma}

\begin{figure}
\begin{center}
\epsfbox{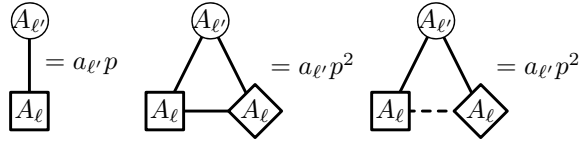}
\end{center}
\caption{The constraints used in the proof of Lemma~\ref{lm-pseudobipartite}.}
\label{fig:CS}
\end{figure}

\begin{proof}
We first define rooted decorated graphs $H$, $H_1$ and $H_2$.
The graph $H$ is an edge with one root;
the root is decorated by $A_{\ell}$ and the other vertex by $A_{\ell'}$.
The graph $H_1$ is a triangle with two roots;
both roots are decorated by $A_{\ell}$ and the remaining vertex by $A_{\ell'}$.
Finally, $H_2$ is a cherry (a path on three vertices) rooted at the two non-adjacent vertices;
the two roots are decorated by $A_{\ell}$ and the remaining vertex by $A_{\ell'}$.

The set $\CC$ is formed by the follwoing three constraints: $H=p$, $H_1=p^2$, and $H_2=p^2$.
The three constraints are depicted in Figure~\ref{fig:CS}.
A graphon $W$ satisfies the three constraints if and only if
\begin{equation}
\int\limits_{A_{\ell'}}W(x,y)\,\dif y=a_{\ell'}p\qquad\mbox{and}\qquad
\int\limits_{A_{\ell'}}W(x,y)\cdot W(x',y)\,\dif y=a_{\ell'}p^2
\label{eq5a}
\end{equation}
for almost every $x$ and $x'$ from $A_\ell$.
In particular, if $W(x,y)=p$ for almost every $(x,y)\in A_{\ell}\times A_{\ell'}$,
then $W$ satisfies the three constraints in $\CC$.

For the ``if'' part of the statement of the lemma,
we follow the reasoning given in~\cite[proof of Lemma 3.3]{bib-lovasz11+} and
conclude that the second equation in \eqref{eq5a} implies that
\begin{equation}
\int\limits_{A_{\ell'}}W^2(x,y)\,\dif y=a_{\ell'}p^2
\label{eq5b}
\end{equation}
for almost every $x$ from the $\ell$-th part of $W$.
The Cauchy-Schwarz inequality used with the first equation from \eqref{eq5a} and the equation from \eqref{eq5b}
yields that, for almost every $x\in A_{\ell}$,
it holds that $W(x,y)=p$ for almost every $y\in A_{\ell'}$.
\end{proof}

\section{Rademacher Graphon}

In this section, we introduce a graphon $W_{R}$, which we refer to as {\em Rademacher graphon},
from the statement of Theorem~\ref{thm:main}.
The name of the graphon comes from the fact that the adjacencies between its parts $A$ and $C$
resembles Rademacher system of functions (such adjacencies also appear in~\cite[Example 13.30]{bib-lovasz-book}).
The graphon $W_{R}$ is visualized in Figure~\ref{fig-chess}.
We establish that $W_{R}$ is finitely forcible in the next two sections.

\begin{figure}
\begin{center}
\epsfbox{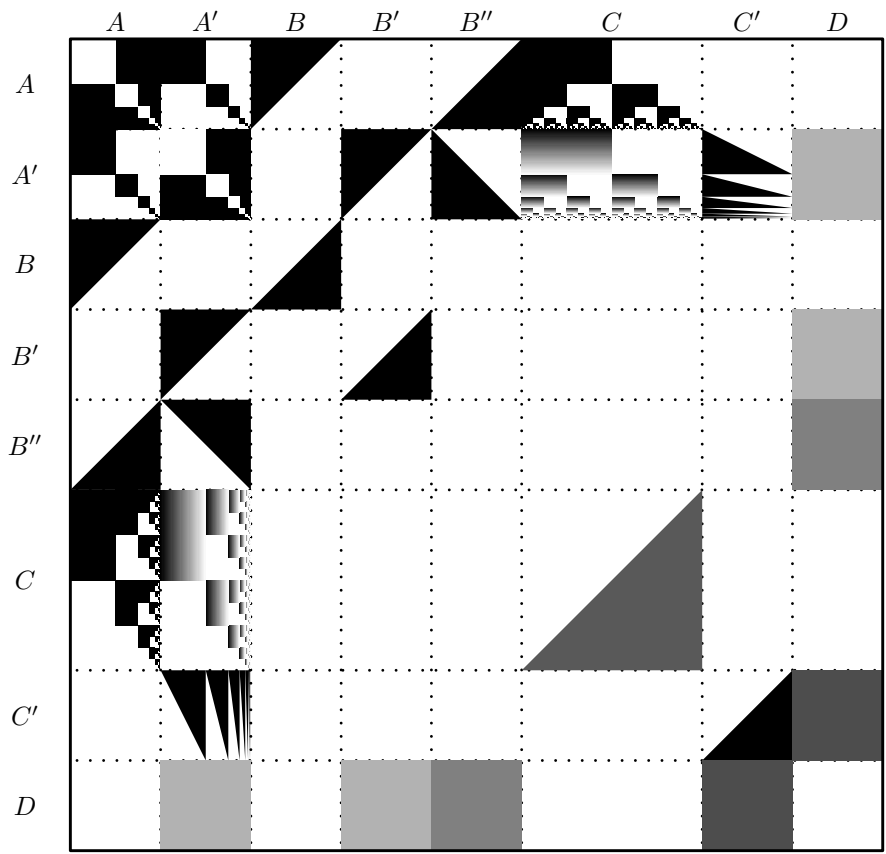}
\end{center}
\caption{Rademacher graphon $W_{R}$.}
\label{fig-chess}
\end{figure}

The graphon $W_{R}$ has eight parts.
Instead of using $A_1, \dots, A_8$ for its parts,
we use $A$, $A'$, $B$, $B'$, $B''$, $C$, $C'$ and $D$.
All the parts except for $C$ have the same size $a=1/9$; the size of $C$ is $2a=2/9$.
Let $\ell_{A}$, $\ell_{A'}$, $\ell_{B}$, $\ell_{B'}$, $\ell_{B''}$, $\ell_{C}$, $\ell_{C'}$ and $\ell_{D}$
be $0$, $a$, $2a$, $3a$, $4a$, $5a$, $7a$ and $8a$, respectively.
The part $Z\in\{A,A',B,B',B'',C',D\}$ of $W_R$ will be formed by the interval $[\ell_Z,\ell_Z+a)$ and
the part $C$ by the interval $[\ell_C,\ell_C+2a)$.

\begin{table}
\begin{center}
\begin{tabular}{|l|cccccccc|}
\hline
Part & $A$ & $A'$ & $B$ & $B'$ & $B''$ & $C$ & $C'$ & $D$ \\
\hline
Degree & $3a$ & $3.2a$ & $a$ & $1.2a$ & $1.4a$ & $1.5a$ & $1.8a$ & $1.6a$ \\
 & $1/3$ & $16/45$ & $1/9$ & $2/15$ & $7/45$ & $1/12$ & $1/5$ & $8/45$ \\
\hline
\end{tabular}
\end{center}
\caption{The degrees of vertices in the nine parts of Rademacher graphon $W_{R}$,
         where $a=1/9$ is the common size of all parts of $W_{R}$ except for $C$.}
\label{tbl-chess}
\end{table}

For $x \in  [0, 1)$, let us denote by $\bit{x}$ the smallest integer $k$ such that $x + 2^{-k} < 1$.
Note that $\bit{x}-1$ is equal to the number of consecutive non-zero bits
after the decimal point in the binary representation of $x$.
We next define the values of $W_{R}(x,y)$ for $(x,y)\in [0,1)^2$, i.e., when both $x$ and $y$ belong to one of the eight parts;
if $x=1$ or $y=1$,
we may set the values arbitrarily (since this concerns the values for a set of measure zero),
for example, to zero.
The value of $W_{R}(x, y)$ is equal to $1$ in the following cases:
\begin{itemize}
\item $x, y \in A$ and $\bit{\frac{x-\ell_A}{a}} \neq \bit{\frac{y-\ell_A}{a}}$,
\item $x, y \in A'$ and $\bit{\frac{x-\ell_{A'}}{a}} \neq \bit{\frac{y-\ell_{A'}}{a}}$,
\item $x \in A$, $y \in A'$ and $\bit{\frac{x-\ell_{A}}{a}} = \bit{\frac{y-\ell_{A'}}{a}}$,
\item $x \in A$, $ y \in B$ and $(x-\ell_{A})+(y-\ell_{B}) \le a$,
\item $x \in A$, $ y \in B''$ and $(x-\ell_{A})+(y-\ell_{B''}) \ge a$,
\item $x \in A'$, $ y \in B'$ and $(x-\ell_{A'})+(y-\ell_{B'})\le a$,
\item $x \in A'$, $ y \in B''$ and $y-\ell_{B''}\le x-\ell_{A'}$,
\item $x, y \in B$ and $(x-\ell_{B}) + (y-\ell_{B}) \ge a$,
\item $x, y \in B'$ and $(x-\ell_{B'}) + (y-\ell_{B'}) \ge a$,
\item $x, y \in C'$ and $(x-\ell_{C'}) + (y-\ell_{C'}) \ge a$,
\item $x \in A$, $ y \in C$ and $\left\lfloor \frac{y-\ell_C}{2a} \cdot 2^{\bit{\frac{x-\ell_{A}}{a}}}\right\rfloor$ is even, and
\item $x \in A'$, $y\in C'$ and  $\left(1-2^{-\bit{\frac{x-\ell_{A'}}{a}}}-\frac{x-\ell_{A'}}{a}\right) \cdot 2^{\bit{\frac{x-\ell_{A'}}{a}}} + \frac{y-\ell_{C'}}{a} \le 1$.
\end{itemize}
If $x\in A'$, $y\in C$ and $\left\lfloor \frac{y-\ell_C}{2a} \cdot 2^{\bit{\frac{x-\ell_{A'}}{a}}}\right\rfloor$ is even,
then
\[W_{R}(x,y)=\left(1-2^{-\bit{\frac{x-\ell_{A'}}{a}}}-\frac{x-\ell_{A'}}{a}\right)\cdot2^{\bit{\frac{x-\ell_{A'}}{a}}}.\]
If $x,y\in C$, then $W_{R}(x,y)=3/4$ if $(x-\ell_C)+(y-\ell_C)\ge 2a$.
If $y\in D$, then
\[W_{R}(x,y)=\left\{
           \begin{array}{cl}
	   0.2 & \mbox{if $x\in A'$ or $x\in B'$,} \\
	   0.4 & \mbox{if $x\in B''$, and } \\
	   0.8 & \mbox{if $x\in C'$.}
	   \end{array}
           \right.\]
Finally, $W_{R}(x,y)$ is zero if neither $(x,y)$ nor the symmetric pair fall in any of the described cases.
It is routine to compute the degrees of vertices in the eight parts of the just defined graphon and
to check that they match the values given in Table~\ref{tbl-chess}.

We finish this section with establishing that
the space of typical vertices of Rademacher graphon $W_R$ is not locally compact as claimed in Theorem~\ref{thm:main}.

\begin{proposition}
\label{prop-non-compact}
The topological space $T(W_{R})$ is not locally compact.
\end{proposition}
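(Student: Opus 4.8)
The plan is to analyze the structure of $T(W_R)$ around vertices of the part $A$ and show that no such vertex has a compact neighborhood in $T(W_R)$. First I would identify which functions $f_x^W$ are typical. Since each part has a distinct degree $d_i$, the value $\int_{[0,1]} f_x^{W_R}(y)\,\dif y$ is constant on each part, so the functions coming from different parts are separated in $L_1[0,1]$; hence $T(W_R)$ decomposes as a disjoint union of the closures of the typical points contributed by each part. The only source of non-compactness should be the part $A$ (and its mirror $A'$), where the adjacency to $C$ is governed by the dyadic quantity $[x/a]$ and the ``Rademacher-like'' pattern $\lfloor (y/2a)2^{[x/a]}\rfloor$. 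So the main work is to understand the family $\{f_x^{W_R} : x\in A\}$ as a subset of $L_1[0,1]$.

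Next I would compute $f_x^{W_R}$ explicitly for $x\in A$ with $[x/a]=n$. On the block $A\cup A'$ the function is essentially an indicator determined by which dyadic interval $x$ lies in, and crucially the restriction to $C$ is the indicator of the set where $\lfloor (y/2a)2^{n}\rfloor$ is even — a square wave of frequency $2^{n}$. The restrictions to $B$, $B''$, $D$ contribute pieces that depend only on whether $x+y$ crosses a threshold or on the constant value $0.2$ on $D$, so they are continuous/simple perturbations. The key point: as $n\to\infty$ (equivalently, as $x\uparrow a$ through the dyadic scale), the restriction of $f_x^{W_R}$ to $C$ runs through all the Rademacher-type square waves $r_n$, and these satisfy $\|r_n-r_m\|_{L_1}\ge c>0$ for $n\ne m$ (any two distinct dyadic square waves differ on a set of measure bounded below). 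Moreover, all these functions $f_x^{W_R}$ with $[x/a]=n$ are within $L_1$-distance tending to $0$ of a single ``limit'' function $f_n$, and the sequence $(f_n)_n$ itself converges in $L_1[0,1]$ to a limit function $f_\infty$ (the square waves tend weakly to the constant $1/2$ but that is not $L_1$-convergence, so I must instead use that on $C$ the $f_n$ are all different but the \emph{other} coordinates stabilize; I would argue that in fact the $f_n$ accumulate at $f_\infty$ where $f_\infty\restriction C$ is the pointwise/a.e. limit value, using that $x/a\to 1$ kills the coefficient $(1-2^{-n}-x/a)2^{n}$ in the $A'$–$C$ block). One then checks $f_\infty\in T(W_R)$ because every neighborhood of it has positive-measure preimage — it is approached by the positive-measure dyadic blocks of $A$.

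Then comes the heart of the argument. Fix the typical point $g:=f_\infty$. Any neighborhood $U$ of $g$ in $T(W_R)$ contains, for all sufficiently large $n$, the point $g_n:=f_n$ (the function associated to the $n$-th dyadic block of $A$), and $g_n$ is an isolated-in-scale point in the sense that its $C$-coordinate is the square wave $r_n$, pairwise $L_1$-far from all $r_m$, $m\ne n$. Hence $\{g_n : n\ge N\}\cup\{g\}$ is a closed discrete-plus-limit subset of $U$; but a closed subset of a compact set is compact, and an infinite set whose only accumulation point is $g$, sitting inside a set where the $g_n$ are pairwise at distance $\ge c$... wait — the $g_n$ \emph{do} converge to $g$, so they are not $c$-separated from $g$. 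The correct obstruction is dimensional: near $g_n$ the space $T(W_R)$ contains a continuum of directions. Precisely, within the single dyadic block $[x/a]=n$ of $A$, varying $x$ over a positive-measure interval, the functions $f_x^{W_R}$ vary (through the $B$, $B''$ thresholds, which move continuously with $x$), tracing a one-parameter family; but the restriction to $C$ for \emph{fixed} $n$ is constant, so within one block we only get a $1$-dimensional piece. Non-local-compactness instead comes from: every neighborhood of $g$ meets infinitely many blocks, and the blocks' $C$-coordinates $r_n$ are pairwise $c$-separated, so any neighborhood of $g$ contains, after removing $g$ itself, an infinite $c$-separated set (the one-parameter families from distinct blocks stay $c$-apart in the $C$-coordinate), which forbids total boundedness of every neighborhood; since $T(W_R)\subseteq L_1[0,1]$ is a metric space, failure of total boundedness of every neighborhood of $g$ means $g$ has no compact neighborhood.

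The main obstacle I anticipate is the bookkeeping in the first two steps: writing down $f_x^{W_R}$ precisely enough to (i) verify that the $C$-coordinates of distinct dyadic blocks are $L_1$-separated by an absolute constant, (ii) verify that $g=f_\infty$ genuinely lies in $T(W_R)$ and that the $g_n$ (or positive-measure families around them) converge to $g$, so that $g$ really is a limit point forcing every neighborhood to contain all but finitely many blocks, and (iii) control the cross-terms from the $B,B',B'',D$ blocks so they do not destroy the separation on $C$. Once the $L_1$-geometry of $\{f_x^{W_R}:x\in A\}$ is pinned down, the topological conclusion (no totally bounded neighborhood $\Rightarrow$ no compact neighborhood in a metric space) is immediate.
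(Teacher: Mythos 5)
Your overall plan — find a typical point $g\in T(W_R)$ whose every neighborhood contains an infinite $c(\varepsilon)$-separated family, and conclude non-total-boundedness hence non-compactness — is exactly the paper's strategy, and you correctly identify the Rademacher pattern on $C$ as the engine. But the family you choose to analyze does not work, and the gap is not a matter of bookkeeping.

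You look at $\{f^{W_R}_x : x\in A\}$. For $x\in A$ with $[x/a]=n$, the restriction of $f^{W_R}_x$ to $C$ is the $\{0,1\}$-valued indicator $r_n$ of the set where $\lfloor (y/2a)2^n\rfloor$ is even. For $n\neq m$ these satisfy $\|r_n-r_m\|_{L_1(C)}=|C|/2$, a fixed positive constant. Consequently your representatives $f_n$ are pairwise at $L_1$-distance bounded below by an absolute constant, i.e.\ they form a uniformly discrete subset of $L_1[0,1]$. Such a set has no accumulation point, so there is no $f_\infty$ with $f_n\to f_\infty$, and your assertion ``the $f_n$ accumulate at $f_\infty$ where $f_\infty\restriction C$ is the pointwise/a.e.\ limit value'' is false — the $r_n$ do not converge pointwise a.e., and the coefficient $(1-2^{-[x/a]}-x/a)2^{[x/a]}$ you invoke belongs to the $A'$–$C$ block, not the $A$–$C$ block you are computing. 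Moreover, a uniformly separated countable set cannot by itself obstruct local compactness ($\mathbb{Z}\subset\mathbb{R}$ is uniformly separated and $\mathbb{R}$ is locally compact); your fallback argument that ``every neighborhood of $g$ meets infinitely many blocks'' presupposes exactly the accumulation that the uniform separation forbids, so the two claims you need are contradictory when the family comes from $A$.

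The paper avoids this by using vertices in $A'$, where the $C$-restriction of $f^{W_R}_{x'}$ is the \emph{scaled} Rademacher function: for $[x'/a]=i$ it equals $(1-2^{-i}-x'/a)\,2^{i}\cdot r_i$, and this coefficient can be tuned to any $\delta\in(0,1)$ by choosing $x'$ near the top of its dyadic block. This gives a two-parameter family $g_{i,\delta}$ with $\|g_{i,\delta}-g\|_1 = O(2^{-i}+\delta)$ (so the family accumulates at a function $g$ as $i\to\infty$, $\delta\to0$) while $\|g_{i,\delta}-g_{i',\delta}\|_1 = \Omega(\delta)$ for $i\neq i'$ (so at scale $\delta$ the family is $\Omega(\delta)$-separated). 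Taking $\delta=\varepsilon$ and $i>\log_2\varepsilon^{-1}$ yields an infinite $\varepsilon/9$-separated set inside the $\varepsilon$-ball around $g$, and then the metric-space argument you reach for at the end applies. The tunable coefficient is only present in $A'$, and without it your construction cannot simultaneously produce accumulation and separation, which is the crux of the proof.
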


\begin{proof}
Let $g:[0,1]\to [0,1]$ be the function defined as follows:
\[g(x)=\left\{
       \begin{array}{cl}
       1 & \mbox{if $x\in A'\cup B''\cup C'$,} \\
       0.2 & \mbox{if $x\in D$, and} \\
       0 & \mbox{otherwise.}
       \end{array}\right.\]
Further, let $g_{i,\delta}:[0,1]\to [0,1]$ for $i\in\NN$ and $\delta\in [0,1]$ be defined as follows:
\[g_{i,\delta}(x)=\left\{
       \begin{array}{cl}
       1 & \mbox{if $x\in A$ and $\bit{\frac{x-\ell_A}{a}}=i$,} \\
       1 & \mbox{if $x\in A'$ and $\bit{\frac{x-\ell_{A'}}{a}}\not=i$,} \\
       1 & \mbox{if $x\in B'$ and $x-\ell_{B'}\le (1+\delta)2^{-i}$,} \\
       1 & \mbox{if $x\in B''$ and $x-\ell_{B''}\le 1-(1+\delta)2^{-i}$,} \\
       \delta & \mbox{if $x\in C$ and $\left\lfloor 2^i\cdot \frac{x-\ell_C}{2a}\right\rfloor$ is even,} \\
       1 & \mbox{if $x\in C'$ and $\frac{x-\ell_{C'}}{a}\le 1-\delta$,} \\
       0.2 & \mbox{if $x\in D$, and} \\
       0 & \mbox{otherwise.}
       \end{array}\right.\]
Observe that $W_{R}\lt(2a-(1+\delta)2^{-i}a,y\rt)=g_{i,\delta}(y)$
for every $i\in\NN$, $\delta\in (0,1)$ and $y\in [0,1]$.
The following two estimates on the $L_1$-distances between $g$ and $g_{i,\delta}$ are straightforward to obtain:
\[
\begin{array}{rcll}
\| g_{i,\delta}-g\|_1 & =&  \frac{(2+2\delta)\cdot 2^{-i}+2\delta}{9}&\mbox{,} \\
\| g_{i,\delta}-g_{i',\delta'}\|_1 & = & \frac{2+2\cdot 2^{-i}}{9}|\delta-\delta'| &\mbox{if $i=i'$, and}\\
\| g_{i,\delta}-g_{i',\delta'}\|_1 & > & \frac{\delta+\delta'}{18} &\mbox{if $i\neq i'$.}
\end{array}
\]
Hence, every neighborhood of $g$ contains $g_{i,\delta}$ with $\delta<\delta_0$ for some $i\in\NN$ and $\delta_0>0$.
Consequently, $g$ belongs to $T(W)$.
An analogous argument yields that $g_{i,\delta}\in T(W)$ for all $i\in\NN$ and $\delta\in (0,1)$.
However, for every $\varepsilon>0$,
all the functions $g_{i,\varepsilon}$ with $i>\log_2\varepsilon^{-1}$
are at $L_1$-distance at most $\varepsilon$ from $g$ and
the $L_1$-distance between any pair of them is at least $\varepsilon/9$.
We conclude that no neighborhood of $g$ in $T(W)$ is compact.
\end{proof}

\section{Constraints}

In this section,
we describe a set $\CC_{R}$ of constraints such that $W_R$ is the unique graphon that satisfies them;
this assertion is then proven in the next section.
For clarity of the exposition,
the constraints are split into eight groups and each is given a name.
\begin{description}
\item[Group 1: Partition constraints.]
  The partition constraints are the (finitely many) constraints given in Lemma~\ref{lm-partition} that
  are satisfied by a graphon $W$ if and only if $W$ is a partitioned graphon with its parts having the same sizes and degrees as those of $W_R$.
\item[Group 2: Zero constraints.]
  The zero constraints are sixteen (non-rooted) decorated constraints of the form $e=0$,
  where $e$ is an edge with its vertices decorated with $X$ and $Y$
  for
  \[\begin{array}{cl}
    (X,Y)\in\{&(B'',B''), (D,D), (A,C'), (A,D), (A',B), (B,B'),\\
              &(B,B''), (B,C), (B,C'), (B,D), (B',B''), (B',C),\\
	      &(B',C'), (B'',C), (B'',C'), (C,D)\;\}.
    \end{array}\]
\item[Group 3: Pseudorandom constraints.]
  The pseudorandom constraints are the rooted decorated constraints given in Lemma~\ref{lm-pseudobipartite} that
  are satisfied by a graphon $W$ that has the same parts as $W_R$ if and only if
  the graphon $W$ is equal to $0.2$ between the parts $D$ and $A'$,
  to $0.2$ between the parts $D$ and $B'$,
  to $0.4$ between the parts $D$ and $B''$, and
  to $0.8$ between the parts $D$ and $C'$.
\item[Group 4: Triangular constraints.]
  For $p\in [0,1]$, let $W_{\blacktriangle,p}$ be the graphon such that $W_{\blacktriangle,p}(x,y)=p$ if $x+y\ge 1$ and
  $W_{\blacktriangle,p}(x,y)=0$ otherwise;
  the graphon $W_{\blacktriangle,p}$ is finitely forcible~\cite[Corollaries 3.15 and 5.2]{bib-lovasz11+} for every $p\in [0,1]$.
  The triangular constraints are the non-rooted decorated constraints given in Lemma~\ref{lm-gadget}
  applied with $W_0=W_{\blacktriangle,1}$ and each of the parts $B$, $B'$ and $C$, and
  with $W_0=W_{\blacktriangle,3/4}$ and the part $C'$.
\item[Group 5: Monotonicity constraints.] The monotonicity constraints are the nine decorated constraints depicted in Figure~\ref{fig-mono}.
\item[Group 6: Split constraints.] The split constraints are the seven decorated constraints depicted in Figure~\ref{fig-split}.
\item[Group 7: Infinitary constraints.] The infinitary constraints are the four decorated constraints depicted in Figure~\ref{fig-infin}.
\item[Group 8: Orthogonality constraints.] The orthogonality constraints are the five decorated constraints depicted in Figure~\ref{fig-ortho}.
\end{description}
By Lemma~\ref{lm-partitioned},
all constraints described above can be expressed as equivalent ordinary constraints (note that
the partition constraints are ordinary constraints by themselves).

\begin{figure}
\begin{center}
\epsfbox{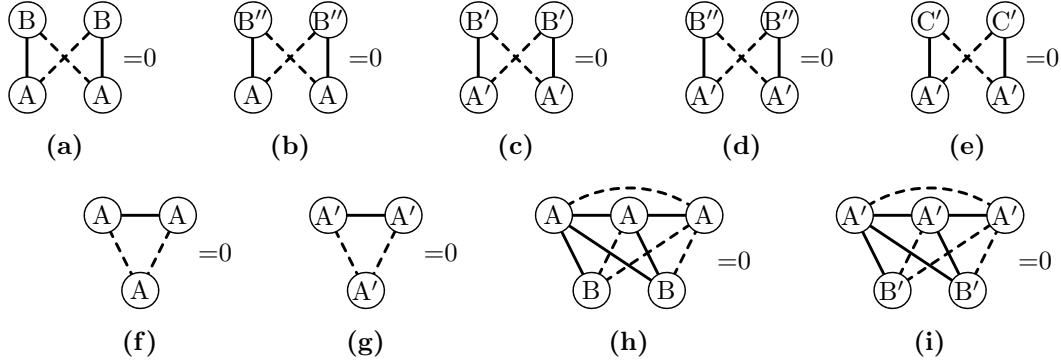}
\end{center}
\caption{The monotonicity constraints.}
\label{fig-mono}
\end{figure}

\begin{figure}
\begin{center}
\epsfbox{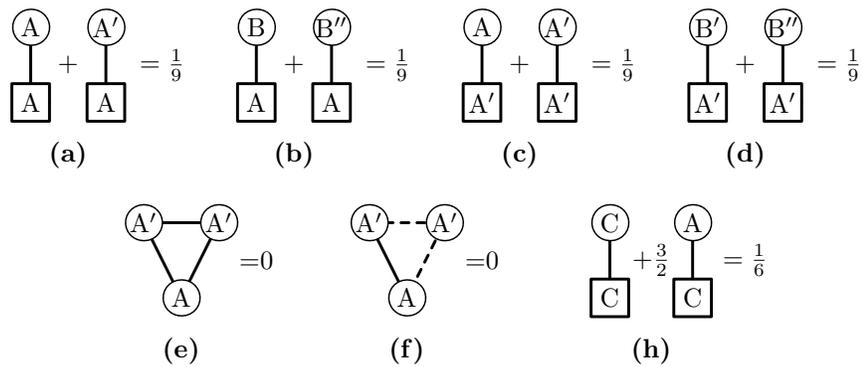}
\end{center}
\caption{The split constraints.}
\label{fig-split}
\end{figure}

\begin{figure}
\begin{center}
\epsfbox{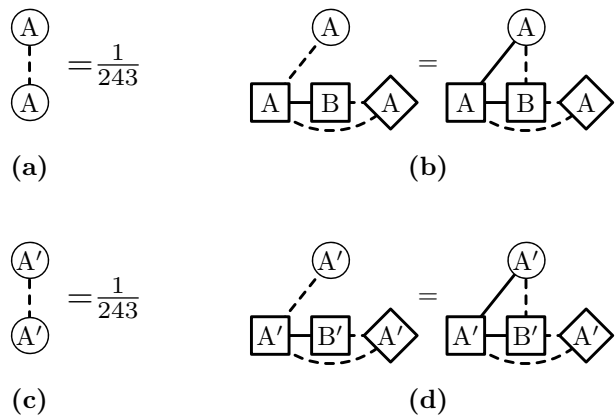}
\end{center}
\caption{The infinitary constraints.}
\label{fig-infin}
\end{figure}

\begin{figure}
\begin{center}
\epsfbox{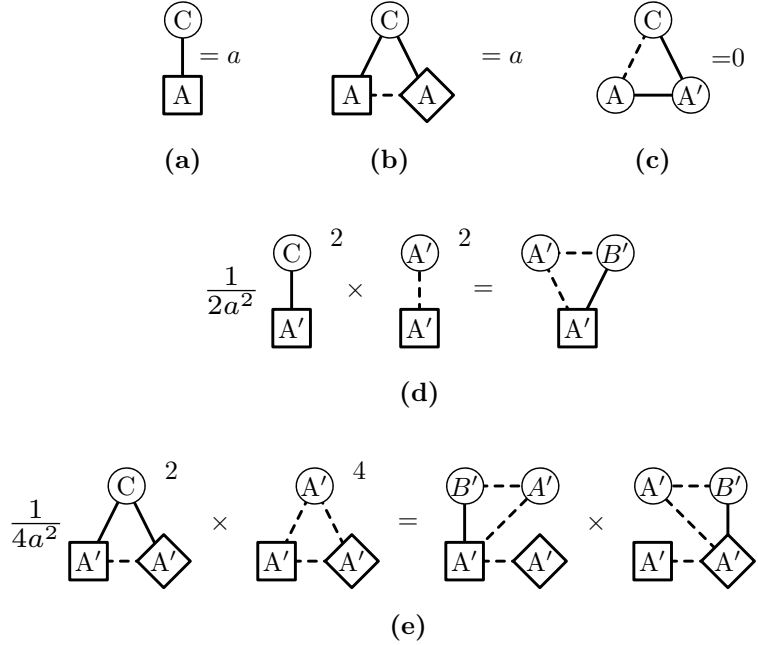}
\end{center}
\caption{The orthogonality constraints. Note that $a=1/9$.}
\label{fig-ortho}
\end{figure}

\section{Forcing}

This section is devoted to proving that Rademacher graphon $W_{R}$ is the unique graphon (up to a weak isomorphism)
satisfying the constraints contained in the set $\CC_R$ presented in the previous section.
The proof is split into several claims, each giving an analysis of the values of a graphon for different parts (see Table~\ref{tbl-plan}).
The analysis will also yield that the graphon $W_{R}$ satisfies all constraints in the set $\CC_R$.

\begin{table}
\begin{center}
\begin{tabular}{c|cccccccc}
& $A$ & $A'$ & $B$ & $B'$ & $B''$ & $C$ & $C'$ & $D$ \\
\hline
$A$ & \ref{clm-group-7} & \ref{clm-group-7} & \ref{clm-group-5} & \ref{clm-group-2+3} & \ref{clm-group-5} & \ref{clm-group-8a} & \ref{clm-group-2+3} & \ref{clm-group-2+3} \\
$A'$ & \ref{clm-group-7} & \ref{clm-group-7} & \ref{clm-group-2+3} & \ref{clm-group-5} & \ref{clm-group-5} & \ref{clm-group-8b} & \ref{clm-group-final} & \ref{clm-group-2+3} \\
$B$ & \ref{clm-group-5} & \ref{clm-group-2+3} & \ref{clm-group-4} & \ref{clm-group-2+3} & \ref{clm-group-2+3} & \ref{clm-group-2+3} & \ref{clm-group-2+3} & \ref{clm-group-2+3} \\
$B'$ & \ref{clm-group-2+3} & \ref{clm-group-5} & \ref{clm-group-2+3} & \ref{clm-group-4} & \ref{clm-group-2+3} & \ref{clm-group-2+3} & \ref{clm-group-2+3} & \ref{clm-group-2+3} \\
$B''$ & \ref{clm-group-5} & \ref{clm-group-5} & \ref{clm-group-2+3} & \ref{clm-group-2+3} & \ref{clm-group-2+3} & \ref{clm-group-2+3} & \ref{clm-group-2+3} & \ref{clm-group-2+3} \\
$C$ & \ref{clm-group-8a} & \ref{clm-group-8b} & \ref{clm-group-2+3} & \ref{clm-group-2+3} & \ref{clm-group-2+3} & \ref{clm-group-4} & \ref{clm-group-2+3} & \ref{clm-group-2+3} \\
$C'$ & \ref{clm-group-2+3} & \ref{clm-group-final} & \ref{clm-group-2+3} & \ref{clm-group-2+3} & \ref{clm-group-2+3} & \ref{clm-group-2+3} & \ref{clm-group-4} & \ref{clm-group-2+3} \\
$D$ & \ref{clm-group-2+3} & \ref{clm-group-2+3} & \ref{clm-group-2+3} & \ref{clm-group-2+3} & \ref{clm-group-2+3} & \ref{clm-group-2+3} & \ref{clm-group-2+3} & \ref{clm-group-2+3}
\end{tabular}
\end{center}
\caption{The numbers of the claims where values of the graphon between the given parts are analyzed.}
\label{tbl-plan}
\end{table}

We start with describing the general set up of the proof.
Let $W$ be a graphon satisfying all constraints in $\CC_{R}$;
our aim is to show that $W$ and $W_{R}$ are weakly isomorphic.
To do so, we assume that $W$ satisfies the constraints in Group 1 (the partition constraints) and
construct two measure preserving maps $\varphi,\psi:[0,1]\to [0,1]$,
which will be fixed for the remainder of the section.
The arguments presented further in this section will give that
the graphons $W^{\varphi}$ and $W^{\psi}_{R}$ are equal almost everywhere.

Since the graphon $W$ satisfies the partition constraints,
Lemma~\ref{lm-partition} implies that $W$ is a partitioned graphon with parts of the same sizes and degrees as the parts of $W_R$.
In particular, there exists a measure preserving map $\varphi:[0,1]\to [0,1]$ such that
each of the parts of $W_R$ is mapped by $\varphi$ to the corresponding part of $W$.
To construct the map $\psi$, we use the following statement, which is known as Monotone Reordering Theorem.
\begin{theorem}
\label{thm-MRT}
Let $I$ be a subinterval of $[0,1]$.
For every measurable function $h:I\to\RR$,
there exist a monotone non-decreasing function $f:I\to\RR$ and a measure preserving map $\psi_I:I\to I$ such that
$h(x)=f(\varphi_I(x))$ for almost every $x\in I$.
\end{theorem}
Recall that $A$, $A'$, $B$, $B'$, $B''$, $C$, $C'$ and $D$ are the half-open intervals which form the parts of $W_R$.
The measure preserving map $\psi$ is constructed by applying Theorem~\ref{thm-MRT}
with each of the intervals $A$, ${A'}$, $B$, ${B'}$, ${B''}$, $C$ and ${C'}$
to obtain non-increasing functions $f_A:A\to\RR$ and $f_{A'}:{A'}\to\RR$, and
non-decreasing functions $f_B:B\to\RR$, $f_{B'}:{B'}\to\RR$, $f_{B''}:{B''}\to\RR$,
$f_C:C\to\RR$ and $f_{C'}:{C'}\to\RR$ such that the following holds almost every $x$:
\[\begin{array}{ccclcccl}
\forall x\in A & f_A(\psi(x)) & = & \int\limits_{B} W^{\varphi}(x,y) \dif y &
\forall x\in {A'} & f_{A'}(\psi(x)) & = & \int\limits_{{B'}} W^{\varphi}(x,y) \dif y \\
\forall x\in B & f_B(\psi(x)) & = & \int\limits_{B} W^{\varphi}(x,y) \dif y &
\forall x\in {B'} & f_{B'}(\psi(x)) & = & \int\limits_{{B'}} W^{\varphi}(x,y) \dif y \\
&&&&
\forall x\in {B''} & f_{B''}(\psi(x)) & = & \int\limits_{{A}} W^{\varphi}(x,y) \dif y \\
\forall x\in C & f_C(\psi(x)) & = & \int\limits_{C} W^{\varphi}(x,y) \dif y &
\forall x\in {C'} & f_{C'}(\psi(x)) & = & \int\limits_{{C'}} W^{\varphi}(x,y) \dif y
\end{array}\]
To complete the definition of $\psi$,
we set $\psi(x)=x$ for all $x\in D$ and $\psi(1)=0$.

We are now ready to start the analysis of the graphon $W$,
which will eventually lead to the conclusion that
the graphons $W^{\varphi}$ and $W^{\psi}_{R}$ are equal almost everywhere.

\begin{claim}
\label{clm-group-2+3}
If $W$ satisfies the constraints in Groups 2 and 3 (the zero and pseudorandom constraints),
then $W^{\varphi}$ and $W^{\psi}_{R}$ are equal for almost every pair $(x,y)$
from each of the following sets:
$A\times (B'\cup C')$, $A'\times B$, $B\times B'$, $(B\cup B'\cup B'')\times (B''\cup C\cup C')$, $C\times C'$ and $D\times [0,1)$.
\end{claim}

\begin{proof}
The constraints in Group 2 yield that $W^{\varphi}(x,y)=0$ for almost every $(x,y)$
from the following sets:
$A\times (B'\cup C')$, $B\times (A'\cup B')$, $(B\cup B'\cup B'')\times (B''\cup C\cup C')$, $C\times C'$ and $D\times (A\cup B\cup C\cup D)$.
Lemma~\ref{lm-pseudobipartite} yields that
if $W$ satifies the constraints in Group 3,
then $W^{\varphi}(x,y)=0.2$ for almost every $(x,y)\in D\times (A'\cup B')$,
$W^{\varphi}(x,y)=0.4$ for almost every $(x,y)\in D\times B''$, and
$W^{\varphi}(x,y)=0.8$ for almost every $(x,y)\in D\times C'$.
It follows that the graphons $W^{\varphi}$ and $W^{\psi}_{R}$ are equal almost everywhere
on the sets listed in the statement of the lemma.
\end{proof}

\begin{claim}
\label{clm-group-4}
If $W$ satisfies the constraints in Group 4 (the triangular constraints),
then $W^{\varphi}$ and $W^{\psi}_{R}$ are equal for almost every pair $(x,y)$
from each of the following sets:
$B\times B$, $B'\times B'$, $C\times C$ and $C'\times C'$.
\end{claim}

\begin{proof}
We focus on the analysis of the values of the graphons $W^{\varphi}$ and $W^{\psi}_{R}$ on $B\times B$.
Recall that $B=[2/9,1/3)=[\ell_B,\ell_B+1/9)$.
Lemma~\ref{lm-gadget} and the choice of the triangular constraints decorated with the part $B$
yield that $f_B(\psi(x))=\psi(x)-\ell_B$ for almost every $x\in B$,
$W^{\varphi}(x,y)=1$ for almost every $(x,y)\in B\times B$ with $\psi(x)+\psi(y)\ge 2\ell_B+1/9$, and
$W^{\varphi}(x,y)=0$ for almost every $(x,y)\in B\times B$ with $\psi(x)+\psi(y)<2\ell_B+1/9$.
It follows that the graphons $W^{\varphi}$ and $W^{\psi}_{R}$ are equal for almost every pair $(x,y)\in B\times B$.
The arguments that $W^{\varphi}$ and $W^{\psi}_{R}$ are equal
for almost every pair $(x,y)\in B'\times B'$, $(x,y)\in C\times C$ and $(x,y)\in C'\times C'$ are analogous.
\end{proof}

Before stating the next lemma, we introduce some additional notation.
If $x$ is a vertex and $Y$ is one of the parts,
we write $N_Y(x)$ for the set of $y\in Y$ such that $W^\varphi(x,y)>0$.
Further, if vertices $x$ and $y$ belong to the same part of $W^\varphi$,
then we write $x\preceq y$ iff $\psi(x)\le\psi(y)$.

\begin{claim}
\label{clm-group-5}
If $W$ satisfies the constraints in Groups 2--6,
then $W^{\varphi}$ and $W^{\psi}_{R}$ are equal for almost every pair $(x,y)$
from each of the following two sets: $A\times (B\cup B'')$ and $A'\times (B'\cup B'')$.
\end{claim}

\begin{proof}
The constraint (a) in Figure~\ref{fig-mono} yields that
$W^{\varphi}(x,y)\in\{0,1\}$ for almost every $(x,y)\in A\times B$ and
$|N_B(x')\setminus N_B(x)|=0$ or $|N_B(x)\setminus N_B(x')|=0$ for almost every pair $(x,x')\in A\times A$ (otherwise,
the density of the decorated graph in the constraint would be positive).
Since the function $f_A$ from the definition of $\psi$ is non-increasing,
it follows that $|N_B(x')\setminus N_B(x)|=0$ for almost every $x,x'\in A$ with $x\preceq x'$.
Next,
since the degree of every $y\in B$ is $1/9$ and $W^{\varphi}(x,y)=0$
for almost every $x\not\in A\cup B$ and almost every $y\in B$,
the sets $N_A(y)$ and $\{x\in A, \psi(x)\le \ell_B+1/9-\psi(y)\}$
differ on a set of measure zero for almost every $y\in B$.
We conclude that the graphons $W^{\varphi}$ and $W^{\psi}_{R}$ are equal for almost every pair $(x,y)\in A\times B$.
An analogous argument involving the constraint (c) in Figure~\ref{fig-mono}
yields that $W^{\varphi}$ and $W^{\psi}_{R}$ are equal for almost every pair $(x,y)\in A'\times B'$.

The constraint (b) in Figure~\ref{fig-mono} and
the non-strict monotonicity of the function $f_{B''}$
yield that
$W^{\varphi}(x,y)\in\{0,1\}$ for almost every $(x,y)\in B''\times A$ and
$|N_{A}(x)\setminus N_{A}(x')|=0$ for almost every $x,x'\in B''$ with $x\preceq x'$.
The split constraint (b) in Figure~\ref{fig-split}
implies that $|N_B(y)+N_{B''}(y)|=1/9$ for almost every $y\in A$.
It follows that the sets $N_{B''}(y)$ and $\{x\in B'',\psi(x)\ge \ell_{B'}+1/9-\psi(y)\}$
differ on a set of measure zero for almost every $y\in A$.
In particular, the graphons $W^{\varphi}$ and $W^{\psi}_{R}$ are equal for almost every pair $(x,y)\in B''\times A$.

Since every vertex of $B''$ has degree $7/45$ and
the graphons $W^{\varphi}$ and $W^{\psi}_{R}$ are equal for almost every pair $(x,y)\in B''\times ([0,1]\setminus A')$,
we derive that 
\[\int\limits_{{A'}} W^{\varphi}(x,y) \dif y=\ell_{B'}+1/9-\psi(x)\]
for almost every $x\in B''$.
Using the monotonicity constraint (d) in Figure~\ref{fig-mono} and
the split constaint (d) in Figure~\ref{fig-split},
we derive in a way analogous to that used in the previous paragraph that
the sets $N_{B''}(y)$ and $\{x\in B'',\psi(x)\le\psi(y)-\ell_{B''}\}$
differ on a set of measure zero for almost every $y\in A'$.
Consequently, the graphons $W^{\varphi}$ and $W^{\psi}_{R}$ are equal for almost every pair $(x,y)\in B''\times A'$.
\end{proof}

We now arrive to one of the two most involved lemmas in this section.

\begin{claim}
\label{clm-group-7}
If $W$ satisfies the constraints in Groups 2--7,
then $W^{\varphi}$ and $W^{\psi}_{R}$ are equal for almost every pair $(x,y)\in (A\cup A')\times (A\cup A')$.
\end{claim}

\begin{proof}
By Claim~\ref{clm-group-5},
the graphons $W^{\varphi}$ and $W^{\psi}_{R}$ are equal for almost every pair $(x,y)\in A\times B$.
The monotonicity constraint (f) in Figure~\ref{fig-mono} yields that
$W^{\varphi}(x,y)\in\{0,1\}$ for almost every $(x,y)\in A\times A$ and that
almost every point $x\in A$ can be associated with a set $J_x\subseteq A$ such that
$W^\varphi(x,x')=0$ for almost every $x'\in\psi^{-1}(J_x)$,
$W^\varphi(x,x')=1$ for almost every $x'\in A\setminus\psi^{-1}(J_x)$, and
$W^\varphi(x',x'')=0$ for almost every $x',x''\in\psi^{-1}(J_x)$.
In addition, the monotonicity constraint (h) from Figure~\ref{fig-mono} yields that
the set $J_x$ contains $\psi(x)$ and
differs from an interval on a set of measure zero for almost every $x\in A$ (otherwise,
the density of the decorated graph depicted in the constraint (h) would be positive).
Hence, we can assume that the set $J_x$ is an open interval for every $x\in A$;
note that this interval is uniquely determined  for almost every $x\in A$.
Finally, the constraint (f) yields that
there exists a set $Z$ of measure zero such that
the intervals $J_x$ and $J_{x'}$ are either the same or disjoint for all $x,x''\in A\setminus Z$.
By including additional points to $Z$ while keeping its measure to be zero,
we can assume that the following holds for all points $x\in A\setminus Z$:
$W^\varphi(x,x')=0$ for almost every $x'\in\psi^{-1}(J_x)$,
$W^\varphi(x,x')=1$ for almost every $x'\in A\setminus\psi^{-1}(J_x)$, and
$W^\varphi(x',x'')=0$ for almost every $x',x''\in\psi^{-1}(J_x)$.

Let $\JJ$ be the set of all non-empty intervals $J_x$, $x\in A\setminus Z$.
Since the intervals in $\JJ$ are disjoint, the set $\JJ$ is equipped with a natural linear order.
Note that the above analysis implies that
the following holds for almost every $(x,x')\in A\times A$:
$W^\varphi(x,x')=0$ if and only if there exists an interval $J\in\JJ$ such that
both $\psi(x)$ and $\psi(x')$ are contained in $J$, and
$W^\varphi(x,x')=1$ otherwise.

We now focus on the infinitary constraint (b) from Figure~\ref{fig-infin}.
Fix the leftmost root $x\in A\setminus Z$ such that $|J_x|>0$, and
observe that the set of choices of vertices for the other two roots has non-zero measure unless $\psi(x)=\sup J_x$.
If $\psi(x)<\sup J_x$, consider any such choice of the remaining two roots.
The left hand side of the constraint is equal to the measure of $J_x$, i.e., $\sup J_x-\inf J_x$, and
the right hand side is equal to $1/9-\sup J_x$.
We conclude that $\sup J_x=1/9-|J_x|$ and so $\inf J_x=1/9-2|J_x|$.
This implies that the set $\JJ$ is well-ordered and countable (recall that
the intervals in $\JJ$ are disjoint).

Let $J_k$ be the $k$-th interval contained in $\JJ$.
Furthermore, for $k\geq 1$, define
\[\beta_k= \frac{2(1-9\inf J_{k+1})}{1-9\inf J_k} = \frac{2|J_{k+1}|}{|J_k|}\;\mbox{,}\]
and let $\beta_0$ be equal to $1-9\inf J_1$.
If the set $\JJ$ is finite, define $\beta_k=0$ for $k\ge|\JJ|$.
Since it holds that $\inf J_{k+1}\ge\sup J_k$, we obtain $\beta_k\leq 1$ for every $k\geq 0$.
We can now express the density of non-edges with both end-vertices decorated by $A$ as
\[\sum_{J\in \JJ} |J|^2=\sum_{k=1}^\infty \lt(\frac{1}{9\cdot 2^{k}}\prod\limits_{k'=0}^{k-1}\beta_{k'}\rt)^2\;\mbox{.}\]
The infinitary constraint (a) in Figure~\ref{fig-infin} asserts that
the above sum is equal to $1/243$.
However, this is possible only if $\beta_k=1$ for every $k\ge 0$.
It follows that the set $\JJ$ is infinite and it holds
\[J_k=\lt(\frac{1-2^{-k+1}}{9},\frac{1-2^{-k}}{9}\rt)\]
for every $k\in\NN$.
We conclude that
the graphons $W^\varphi$ and $W^\psi_{R}$ agree almost everywhere on $A\times A$.

A completely analogous argument using
the monotonicity constraints (g) and (i) in Figure~\ref{fig-mono} and
the infinitary constraint (c) and (d) in Figure~\ref{fig-infin} yields that
the graphons $W^\varphi$ and $W^\psi_{R}$ agree almost everywhere on $A'\times A'$.
In particular, there exists an infinite set $\JJ'$ containing intervals
\[J'_k=\lt(\ell_{A'}+\frac{1-2^{-k+1}}{9},\ell_{A'}+\frac{1-2^{-k}}{9}\rt)\]
for all $k\in\NN$, and the following holds for almost every $(x,x')\in A'\times A'$:
$W^\varphi(x,x')=0$ if and only if there exists an interval $J\in\JJ'$ such that
both $\psi(x)$ and $\psi(x')$ are contained in $J$, and
$W^\varphi(x,x')=1$ otherwise.

To complete the proof of the lemma,
we need to establish that the graphons $W^\varphi$ and $W^\psi_{R}$ agree almost everywhere on $A\times A'$.
The split constraints (e) and (f) from Figure~\ref{fig-split}
yield that for almost every $x\in A$ with $\lt|N_{A'}(x)\rt|>0$,
there exists $J'\in \JJ'$ such the following holds:
$W^\varphi(x,y)=1$ for almost every $y\in \psi^{-1}(J')$ and
$W^\varphi(x,y)=0$ for almost every $y\in A'\setminus \psi^{-1}(J')$.
Fix $k\in\NN$.
Since $\lt|N_A(x)\rt|=1-\frac{1}{2^k\cdot 9}$ for almost every $x\in\psi^{-1}(J_k)$,
the split constraint (a) from Figure~\ref{fig-split} implies that
the measure of the interval $J'$ associated with $x$ as given above
is equal $\frac{1}{2^k\cdot 9}$ for almost every $x\in\psi^{-1}(J_k)$.
Hence, for almost every $x\in\psi^{-1}(J_k)$, the interval $J'$ is the interval $J'_k$.
It follows that the following holds for almost every $(x,y)\in A\times A'$:
$W^\varphi(x,y)=1$ if there exists $k$ such that $\psi(x)\in J_k$ and $\psi(y)\in J'_k$, and
$W^\varphi(x,y)=0$ otherwise.
We conclude that the graphons $W^\varphi$ and $W^\psi_{R}$ agree almost everywhere on $A\times A'$.
\end{proof}

We now come to the second main lemma of this section.

\begin{claim}
\label{clm-group-8a}
If $W$ satisfies the constraints in Groups 2--8,
then $W^{\varphi}$ and $W^{\psi}_{R}$ are equal for almost every pair $(x,y)\in A\times C$.
\end{claim}

\begin{figure}
\begin{center}
\epsfbox{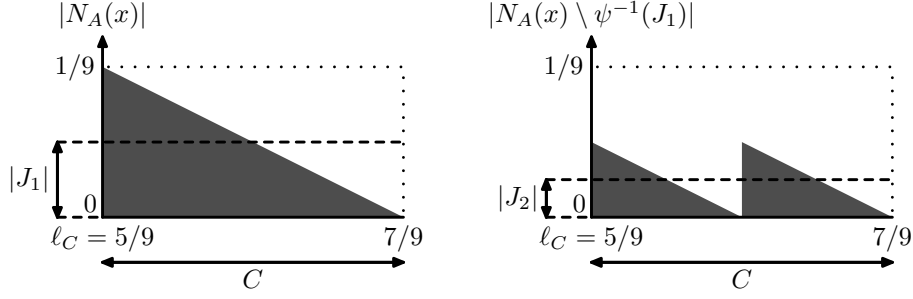}
\end{center}
\caption{Visualization of the argument used in the proof of Claim~\ref{clm-group-8a} to establish that
         the graphons $W^\varphi$ and $W^\psi_{R}$ agree almost everywhere on $A\times C$.}
\label{fig-AvsC}
\end{figure}

\begin{proof}
First note that Claims~\ref{clm-group-4} and~\ref{clm-group-7} guarantee that
the graphons $W^{\varphi}$ and $W^{\psi}_{R}$ agree almost everywhere on $A\times A$ and $C\times C$.
Further, let $\JJ$ be the set of intervals from the proof of Claim~\ref{clm-group-7}.
The orthogonality constraints (a) and (b) from Figure~\ref{fig-ortho} yield that
there exist measurable subsets $I_k\subseteq C$ with $|I_k|=a=1/9$ for every $k\in\NN$ such that
the following holds for almost every $y\in\psi^{-1}(J_k)$:
$N_C(y)$ differs from $I_k$ on a set of measure zero and $W^\varphi(x,y)=1$ for almost every $x\in I_k$.
In particular, the following holds for almost every $(x,y)\in C\times A$:
$W^\varphi(x,y)=1$ if there exists $k\in\NN$ such that $x\in I_k$ and $\psi(y)\in J_k$, and
$W^\varphi(x,y)=0$ otherwise.
Consequently, $\psi^{-1}(J_k)$ is contained in $N_A(x)$ upto a set of measure zero for almost every $x\in I_k$.

Since the function $f_C$ is non-decreasing, Claim~\ref{clm-group-4} implies that
\[f_C(\psi(x))=\frac{3}{4}\lt(\psi(x)-\ell_C\rt)\]
for almost every $x\in C$.
Hence, the split constraint (h) from Figure~\ref{fig-split} yields that
\[|N_A(x)|=\frac{1}{9}-\frac{1}{2}\lt(\psi(x)-\ell_C\rt)\]
for almost every $x\in C$.
Next note that it holds $|N_A(x)|\ge |J_1|=1/18$ for almost every $x\in I_1$
since $N_A(x)$ contains $\psi^{-1}(J_1)$ upto a set of measure zero for almost every $x\in I_1$.
It follows that $I_1$ and the set $\psi^{-1}([\ell_C,\ell_C+1/9])$ differ on a set of measure zero;
the argument is also visualized in Figure~\ref{fig-AvsC}.
We next iterate this argument.
In the next iteration,
we observe that $|N_A(x)\setminus\psi^{-1}(J_1)|\ge |J_2|=1/36$ for almost every $x\in I_2$
since $N_A(x)$ contains $\psi^{-1}(J_2)$ upto a set of measure zero for almost every $x\in I_2$.
Hence, we get that $|J_1|+|J_2|\le |N_A(x)|$ for almost every $x\in I_1\cap I_2$ and
$|J_2|\le |N_A(x)|$ for almost every $x\in I_2\setminus I_1$.
It follows that $I_2$ and the set $\psi^{-1}([\ell_C,\ell_C+1/18]\cup [\ell_C+1/9,\ell_C+3/18])$ differ on a set of measure zero.
For the next iteration,
we observe that $|N_A(x)\setminus\psi^{-1}(J_1\cup J_2)|\ge |J_3|=1/72$ for almost every $x\in I_3$ and
conclude that $I_3$ and the set
$\psi^{-1}([\ell_C,\ell_C+1/36]\cup [\ell_C+1/18,\ell_C+3/36]\cup [\ell_C+1/9,\ell_C+5/36]\cup [\ell_C+3/18,\ell_C+7/36])$ differ on a set of measure zero.
In general, we obtain that the set $I_k$ differs from the preimage with respect to $\psi$ of the set
\[\bigcup_{i=1}^{2^{k-1}}\left[\ell_C+\frac{2i-2}{9\cdot 2^{k-1}},\ell_C+\frac{2i-1}{9\cdot 2^{k-1}}\right]\]
on a set of measure zero for every $k\in\NN$.
Hence,
the graphons $W^\varphi$ and $W^\psi_{R}$ agree almost everywhere on $A\times C$.
\end{proof}

We next analyze the values of the graphon, which causes the space of the typical vertices not to be locally compact.

\begin{claim}
\label{clm-group-8b}
If $W$ satisfies the constraints in Groups 2--8,
then $W^{\varphi}$ and $W^{\psi}_{R}$ are equal for almost every pair $(x,y)\in A'\times C$.
\end{claim}

\begin{proof}
First note that Claims~\ref{clm-group-7} and~\ref{clm-group-8a} guarantee that
the graphons $W^{\varphi}$ and $W^{\psi}_{R}$ agree almost everywhere on $(A\cup A')\times (A\cup A')$ and $A\times C$.
Let $\JJ$ and $\JJ'$ be the sets of intervals from the proof of Claim~\ref{clm-group-7} and
let $I_k$, $k\in\NN$, be the sets from the proof of Claim~\ref{clm-group-8a}.
To prove the assertion of this lemma, we need to show that the following holds for almost every $(x,y)\in A'\times C$:
\begin{equation}
W^\varphi(x,y)=\frac{1-2^{-\bit{\frac{\psi(x)-\ell_{A'}}{a}}}-\frac{\psi(x)-\ell_{A'}}{a}}{2^{-\bit{\frac{\psi(x)-\ell_{A'}}{a}}}}
\label{eq-goal-8b}
\end{equation}
if there exists $k\in\NN$ such that $\psi(x)\in J'_k$ and $y\in I_k$, and
$W^\varphi(x,y)=0$ otherwise.

The orthogonality constraint (c) from Figure~\ref{fig-ortho} implies that
the following holds for every $k\in\NN$:
the set $N_C(x')$ is a subset of $N_C(x)$ up to a set of measure zero for almost every
$x\in\psi^{-1}(J_k)$ and $x'\in\psi^{-1}(J'_k)$.
Note that $N_C(x)$ differs from $I_k$ on a set of measure zero for almost every $x\in\psi^{-1}(J_k)$.
Hence,
$W^\varphi(x,y)=0$ for almost every $x\in \psi^{-1}(J'_k)$ and $y\in C\setminus I_k$.

We next interpret the orthogonality constraint (d) from Figure~\ref{fig-ortho}.
Fix an integer $k\in\NN$ and a typical vertex $x\in \psi^{-1}(J'_k)$.
The first term in the product on the left hand side of the constraint
is equal to 
\[\left(\int\limits_C W^{\varphi}(x,y)\dif y\right)^2 =\left( \int\limits_{I_k} W^{\varphi}(x,y)\dif y\right)^2\;\mbox{.}\]
The second term in the product is equal to
\[|J'_k|^2=\left(2^{-\bit{\frac{\psi(x)-\ell_{A'}}{a}}}\cdot a\right)^2.\]
The term on the right hand side is equal to the probability that
random $x'$ and $y$ chosen uniformly and independently from $[0,1]$ satisfy $x'\in \psi^{-1}(J'_k)$, $y\in B'$, and $\psi(x)<\psi(y)<\psi(x')$.
This probability is equal to
\[\frac{a^2}{2}\left(1-2^{-\bit{\frac{\psi(x)-\ell_{A'}}{a}}}-\frac{\psi(x)-\ell_{A'}}{a}\right)^2\;\mbox{.}\]
We deduce that almost every $x\in\psi^{-1}(J'_k)$ satisfies
\begin{equation}
\int\limits_{I_k} W^{\varphi}(x,y)\dif y=\frac{1-2^{-\bit{\frac{\psi(x)-\ell_{A'}}{a}}}-\frac{\psi(x)-\ell_{A'}}{a}}{2^{-\bit{\frac{\psi(x)-\ell_{A'}}{a}}}}\cdot a\;\mbox{.}
\label{eq-cs1}
\end{equation}
An analogous reasoning for the orthogonality constraint (e) given in Figure~\ref{fig-ortho} yields that
almost every pair of vertices $x,x'\in\psi^{-1}(J'_k)$ satisfies
\begin{eqnarray*}
&\frac{1}{4a^2}\cdot\left(\int\limits_{I_k}W^{\varphi}(x,y)W^{\varphi}(x',y)\dif y\right)^2\cdot
\left(2^{-\bit{\frac{\psi(x)-\ell_{A'}}{a}}}\right)^4 =\\
&\qquad\frac{1}{4}\left(1-2^{-\bit{\frac{\psi(x)-\ell_{A'}}{a}}}-\frac{\psi(x)-\ell_{A'}}{a}\right)^2\cdot
\left(1-2^{-\bit{\frac{\psi(x')-\ell_{A'}}{a}}}-\frac{\psi(x')-\ell_{A'}}{a}\right)^2\;\mbox{.}
\end{eqnarray*}
In the same way as in the proof of Lemma~\ref{lm-pseudobipartite},
we obtain from the above equality that the following holds for almost every $x\in\psi^{-1}(J'_k)$:
\begin{equation}
\left(\int\limits_{I_k} W^{\varphi}(x,y)^2\dif y\right)^{1/2}=\frac{1-2^{-\bit{\frac{\psi(x)-\ell_{A'}}{a}}}-\frac{\psi(x)-\ell_{A'}}{a}}{3\cdot 2^{-\bit{\frac{\psi(x)-\ell_{A'}}{a}}}}\;\mbox{.}
\label{eq-cs2}
\end{equation}
Using Cauchy-Schwarz Inequality,
we deduce from (\ref{eq-cs1}) and (\ref{eq-cs2}) (recall that $|I_k|=a$) that
(\ref{eq-goal-8b}) holds for almost every $x\in\psi^{-1}(J'_k)$ and $y\in I_k$.
Hence, the graphons $W^{\varphi}$ and $W^\psi_{R}$ agree almost everywhere on $A'\times C$.
\end{proof}

It remains to show that the graphons $W^{\varphi}$ and $W^\psi_{R}$ agree almost everywhere on $A'\times C'$.

\begin{claim}
\label{clm-group-final}
If $W$ satisfies the constraints in Groups 2--8,
then $W^{\varphi}$ and $W^{\psi}_{R}$ are equal for almost every pair $(x,y)\in A'\times C'$.
\end{claim}

\begin{proof}
The monotonicity constraint (e) from Figure~\ref{fig-mono} yields that
$W^{\varphi}(x,y)\in\{0,1\}$ for almost every $(x,y)\in A\times C'$ and
at least one of the sets $N_{C'}(x)\setminus N_{C'}(x')$ or $N_{C'}(x')\setminus N_{C'}(x)$ has measure zero
and almost every pair $x,x'\in A'$.
Claims~\ref{clm-group-2+3}, \ref{clm-group-5}, \ref{clm-group-7} and~\ref{clm-group-8b}
imply that the graphons $W^{\varphi}$ and $W^{\psi}_{R}$ are equal for almost every pair $(x,y)\in A'\times ([0,1]\setminus C')$.
Since every vertex $x\in A'$ has degree $16/45$, we obtain that
\[|N_{C'}(x)|=\frac{1}{9}\left(1-2^{-\bit{\frac{\psi(x)-\ell_{A'}}{a}}}-\frac{\psi(x)-\ell_{A'}}{a}\right)\cdot 2^{\bit{\frac{\psi(x)-\ell_{A'}}{a}}}\]
for almost every $x\in A'$.
Similarly,
Claims~\ref{clm-group-2+3} and~\ref{clm-group-4} give that they are equal for almost every pair $(x,y) C'\times ([0,1]\setminus A)$,
which yields that $|N_{A'}(y)|=1/9-(\psi(y)-\ell_{C'})$ for almost every $y\in C'$.
It follows that $N_C'(x)$ and
differs from the preimage with respect to $\psi$ of the set
\[\left[\ell_{C'},\ell_{C'}+\frac{1}{9}\left(1-2^{-\bit{\frac{\psi(x)-\ell_{A'}}{a}}}-\frac{\psi(x)-\ell_{A'}}{a}\right)\cdot 2^{\bit{\frac{\psi(x)-\ell_{A'}}{a}}}\right)\]
on a set of measure zero for almost every $x\in A'$.
Hence, the graphons $W^{\varphi}$ and $W^\psi_{R}$ agree almost everywhere on $A'\times C'$.
\end{proof}

Claims~\ref{clm-group-2+3}--\ref{clm-group-final} yield the following.

\begin{corollary}
\label{cor-main}
The graphon $W_{R}$ is finitely forcible.
\end{corollary}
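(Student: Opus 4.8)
The plan is to deduce the corollary directly from Theorem~\ref{thm-main}, using the observation from the notation section that a graphon which is the unique solution, up to weak isomorphism, of a finite set of density constraints is finitely forcible. So it suffices to check that $\CC_{R}$ is such a finite set and that $W_{R}$ is its unique solution.

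First I would confirm that $\CC_{R}$ is genuinely finite and can be taken to consist of ordinary (non-rooted, non-decorated) density constraints. The partition constraints form a finite set by Lemma~\ref{lm-partition}; the zero constraints are a finite list by inspection; the triangular and pseudorandom constraints are finite by Lemmas~\ref{lm-gadget} and~\ref{lm-pseudobipartite} together with the cited corollaries of~\cite{bib-lovasz11+}; and the monotonicity, split, infinitary, and orthogonality constraints are exactly the finitely many rooted decorated constraints drawn in Figures~\ref{fig-mono}--\ref{fig-ortho}, each of which translates into an equivalent ordinary constraint by Lemma~\ref{lm-partitioned} once Lemma~\ref{lm-partition} has forced the ambient graphon to be partitioned with the prescribed part sizes and degrees. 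Hence $\CC_{R}$ is equivalent to a finite set of ordinary constraints.

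Next I would verify that $W_{R}$ itself satisfies every constraint in $\CC_{R}$. This is routine from the construction: $W_{R}$ is a partitioned graphon with the eight part sizes and the degrees recorded in Table~\ref{tbl-chess}, so it meets the partition constraints; it vanishes on precisely the listed pairs of parts, so it meets the zero constraints; the relevant bipartite and induced pieces of $W_{R}$ are the half graphons and pseudorandom graphons that the triangular and pseudorandom constraints force; and the monotonicity, split, infinitary, and orthogonality constraints hold by the explicit description of $W_{R}$ on $A,A',B,B',B'',C,C'$ --- in particular infinitary constraint~(a) holds because the intervals $J_k=\bigl(\tfrac{1-2^{-k+1}}{9},\tfrac{1-2^{-k}}{9}\bigr)$ give $\sum_{k\ge 1}|J_k|^2=\sum_{k\ge 1}(9\cdot 2^{k})^{-2}=1/243$. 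Then Theorem~\ref{thm-main} applies: any graphon $W$ satisfying $\CC_{R}$ admits measure preserving maps $\varphi,\psi$ with $W^{\varphi}=W^{\psi}_{R}$ almost everywhere. Since $W$ is weakly isomorphic to $W^{\varphi}$, since $W^{\psi}_{R}$ is weakly isomorphic to $W_{R}$, and since subgraph densities depend only on the almost-everywhere equivalence class, this yields $d(H,W)=d(H,W_{R})$ for every graph $H$, i.e.\ $W$ is weakly isomorphic to $W_{R}$. Combined with the previous sentence, $W_{R}$ is the unique graphon up to weak isomorphism satisfying $\CC_{R}$, hence finitely forcible.

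There is essentially no obstacle at the level of the corollary: all of the difficulty is already absorbed into Theorem~\ref{thm-main}. The only points demanding a little care are the bookkeeping that each of the families composing $\CC_{R}$ is finite and reducible to ordinary constraints, the use of transitivity of weak isomorphism to pass from $W^{\varphi}=W^{\psi}_{R}$ a.e.\ to ``$W$ weakly isomorphic to $W_{R}$'', and the routine but not entirely trivial check that $W_{R}$ satisfies $\CC_{R}$, especially that the geometric series appearing in the infinitary constraint evaluates to the forced value $1/243$.
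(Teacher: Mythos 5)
Your proposal is correct and follows the same route as the paper, which derives the corollary from Theorem~\ref{thm-main} in a single line (``Theorem~\ref{thm-main} immediately yields the following''). Your explicit verifications---that $\CC_R$ reduces via Lemma~\ref{lm-partitioned} to finitely many ordinary constraints, that $W_R$ itself satisfies $\CC_R$ (needed so that the constraints actually pin down the densities occurring in them, per the observation in the notation section), and that weak isomorphism is transitive through the almost-everywhere equalities $W^\varphi=W^\psi_R$---are precisely the details the paper leaves implicit, so there is no gap.
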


Proposition~\ref{prop-non-compact} and Corollary~\ref{cor-main} together give a proof of Theorem~\ref{thm:main}.

\section{Conclusion}
\label{sec:concl}

It is quite clear that the construction of Rademacher graphon can be modified
to yield other graphons $W$ with non-compact $T(W)$. Some of these modifications
can yield such graphons with a smaller number of parts at the expense of
making the argument that the graphon is finitely forcible less transparent.

In \cite{bib-lovasz11+}, finite forcibility is considered inside two classes of functions.
Conjecture~\ref{conj:1}, which we addressed in this paper, relates to the class they refer to as $\WW_0$.
This class consists of symmetric measurable functions from $[0,1]^2$ to $[0,1]$.
A larger class referred to as $\WW$ in~\cite{bib-lovasz11+}
is the class containing all symmetric measurable functions from $[0,1]^2$ to $\RR$.
We remark that our arguments can be extended to show that
Rademacher graphon $W_R$ is also
finitely forcible inside this larger class.
Also note that stronger constraints involving multigraphs were used in~\cite{bib-lovasz11+} 
but we have used only constraints involving simple graphs in this paper.

In \cite{bib-lovasz-book}, an analogue of the space $T(W)$ with respect to the following metric is considered.
If $f,g\in L_1[0,1]$, then
\[d_W(f,g):=\int\limits_{[0,1]}\left|\;\int\limits_{[0,1]} W(x,y)(f(y)-g(y))\dif y\right| \dif x\;\mbox{.}\]
It is interesting to note that $T(W)$ is always a compact space when metrized by $d_W$~\cite[Corollary 13.28]{bib-lovasz-book}.

Since every non-compact subset of $L_1[0,1]$ has infinite Minkowski dimension (with the same metric as $L_1[0,1]$),
Theorem~\ref{thm:main} also provides a partial answer to~\cite[Conjecture 10]{bib-lovasz11+},
stated here as Conjecture~\ref{conj:2}.
However, the dimension is finite when several other notions of dimension are considered, and
so we do not claim to disprove this conjecture in this paper.
In~\cite{bib-our-next-paper}, the first two authors and Klimo\v sov\'a disprove Conjecture~\ref{conj:2}
in a more convincing way: they construct a finitely forcible graphon $W$
such that a subspace of $T(W)$ is homeomorphic to $[0,1]^\infty$.
The graphon constructed in~\cite{bib-our-next-paper} also has infinite Minkowski dimension
with respect to the metric $d_W$, which has implications on the sizes of its weak regularity partitions~\cite{bib-lovasz-book,bib-lovasz07+}.
A construction of finitely forcible graphons that require weak regularity partitions
with the number of parts almost matching the existing tight lower bound was then given in~\cite{bib-third-paper}.
Both constructions are based on partitioned graphons used in this paper.
As we have mentioned in Section~\ref{sec:intro},
this line of research led to general framerworks for constructing complex finitely graphons presented in~\cite{bib-CKM} and~\cite{bib-KLNS}.

We finish by presenting a construction of a finitely forcible graphon $W_d$
with a part of $T(W_d)$ positive measure isomorphic to $[0,1]^d$;
the construction is analogous to one found earlier by Norine~\cite{bib-norine-comm}.
Fix a positive integer $d$. We construct
a graphon $W_d$ with $2d+2$ parts $A$, $B_1,\ldots,B_{2d}$, and $C$, each of
size $(2d+2)^{-1}$. If $x,y\in B_i$, then $W_d(x,y)=1$ if $x+y\ge (2d+2)^{-1}$,
i.e., $W_d$ is the half graphon on each $B_i^2$.
If $x\in B_i$ and $y\in C$, then $W_d(x,y)=W_d(y,x)=i/4d$.
Fix now a measure preserving map $\varphi$ from $[0,1]$ to $[0,1]^d$.
If $x\in A$ and $y\in B_i$, $i\le d$, then $W_d(x,y)=W_d(y,x)=1$ if $\varphi((2d+2)x)_i\ge (2d+2)y$.
Finally,
if $x\in A$ and $y\in B_i$, $i\ge d+1$, then $W_d(x,y)=W_d(y,x)=1$ if $1-\varphi((2d+2)x)_i\ge (2d+2)y$.
The graphon $W_d$ is equal to zero for other pairs of vertices.
Clearly, $W_d$ is a partitioned graphon with $2d+2$ parts with vertices inside each part having the same degree and
vertices in different parts having different degrees.
Using the techniques presented in this paper and generalizing arguments from~\cite{bib-kral12+},
one can show that $W_d$ is finitely forcible.
Since the subspace of $T(W_d)$ formed by typical vertices from $A$ is homeomorphic
to $[0,1]^d$, the Lebesgue dimension of $T(W_d)$ is at least $d$ (and
the same is true for most notions of a dimension of a topological space).
This shows that finitely forcible graphons can have arbitrarily large finite dimension.

\section*{Acknowledgements}

The authors would like to thank Jan Hladk\'y, Tereza Klimo\v sov\'a, Serguei
Norine, Vojt\v ech T\r{u}ma for their valuable comments on the topics discussed in the paper, and
to the anonymous referee for the detailed comments on the original version of the paper,
which have resulted in a significant improvement of the presentation of the results in the paper.

\end{document}